\def\N{\mathbb{N}}
\newtheorem{thm}{Theorem}[section]
\newtheorem{cor}[thm]{Corollary}
\newtheorem{lem}[thm]{Lemma}
\numberwithin{equation}{section}
\numberwithin{equation}{section}
\title{Completely monotonic ratios of basic and ordinary gamma functions}
\author{Christian\:Berg$^{\rm a}$, Asena\:\c{C}etinkaya$^{\rm b}$, Dmitrii\:Karp$^{\rm c}$\footnote{Corresponding author. E-mail: D. Karp -- \emph{dimkrp@gmail.com}, Ch.\:Berg -- \emph{berg@math.ku.dk}, A.\:\c{C}etinkaya --  \emph{asnfigen@hotmail.com}}~
\\[10pt]\small{\textit{$\phantom{1}^a$University of Copenhagen, Denmark}}\\\small{\textit{$\phantom{1}^b$\.{I}stanbul K\"{u}lt\"{u}r University, Turkey}}
\\\small{\textit{$\phantom{1}^c$Holon Institute of Technology, Israel}}}
\date{}
\begin{document}

\maketitle

\begin{center}
\parbox{12cm}{
\small\textbf{Abstract.}
We investigate conditions for logarithmic complete monotonicity of product ratios of gamma and $q$-gamma
functions whose arguments are linear functions of the variable.  We give necessary and sufficient conditions in terms of nonnegativity of a certain explicitly written measure in the $q$ case and of a certain elementary function in the classical $q=1$ case. In the latter case we further provide simple new sufficient conditions leading to many new examples of logarithmically completely monotonic gamma ratios. Finally, we apply some of our results to study monotonicity of some gamma ratios and rational functions.}
\end{center}

\bigskip

Keywords: \emph{gamma function, $q$-gamma function, digamma function, completely monotonic function, Bernstein function, logarithmic complete monotonicity, Sherman's theorem}

\bigskip

MSC2010: 33B15, 33D05, 26A48

\bigskip

\section{Introduction}
Completely monotonic functions are infinitely differentiable non-negative functions defined on $(0,\infty)$ such that
$(-1)^nf^{(n)}(x)\ge0$ for $n\ge1$ and $x>0$ \cite[Definition~1.3]{SSV}. They are characterized in Bernstein's theorem
as Laplace's transforms of nonnegative measures \cite[Theorem~1.4]{SSV}. These functions are of importance in many fields including probability, potential theory, asymptotic analysis and combinatorics. Details and references can be found, for instance, in a nice survey \cite{Koumandos}.  A positive function $f$ is said to be logarithmically completely
monotonic (l.c.m.) if $-(\log{f})'$ is completely monotonic \cite[Definition~5.8]{SSV}.  The class of l.c.m. functions is a proper subset of the class of c.m. functions. Their importance stems from the fact that they represent Laplace transforms of infinitely divisible distributions, see \cite[Theorem~5.9]{SSV}.

The study of (logarithmic) complete monotonicity for ratios of products of gamma functions was probably initiated by Bustoz and Ismail in their 1986 paper \cite{BusIsm}. Today this topic has a rather rich literature
that includes contributions made by Ismail and Muldoon \cite{IsmailMuldoon94}, Alzer \cite{Alzer}, Grinshpan and Ismail \cite{GrinIsm}, Alzer and Berg \cite{AlzBerg}, Guo and Qi \cite{Guo-Qi,QiGuo2020}, Karp and Prilepkina \cite{KPCMFT2016}, Ouimet \cite{Ouimet2018,Ouimet2019} and Qi et.al.\cite{QiLiYuDuGuo2019}. The original result of Bustoz and Ismail has been recently substantially strengthened by Berg, Koumandos and Pedersen in \cite{BergKoumPedrsen}.

More specifically, the third named author (jointly with E.\:Prilepkina) considered in \cite{KPCMFT2016} the function
\begin{equation}\label{eq:KP2016}
x\to\frac{\prod_{i=1}^p\Gamma(A_ix+a_i)}{\prod_{j=1}^s\Gamma(B_jx+b_j)},
\end{equation}
where $A=(A_1,\ldots,A_p)$ and $B=(B_1,\ldots,B_s)$ are strictly positive scaling factors, while $a=(a_1,\ldots, a_p)$ and $b=(b_1,\ldots,b_s)$ are non-negative.  Conditions for this function to be logarithmically completely monotonic and the representing measure were found.  The first purpose of this paper is to extend some of the results of \cite{KPCMFT2016} to the ratio
\begin{equation}\label{eq:Wq}
W_q(x)=\frac{\prod_{i=1}^p\Gamma^{\alpha_i}_q(A_ix+a_i)}{\prod_{j=1}^s\Gamma^{\beta_j}_q(B_jx+b_j)},
\end{equation}
where the $q$-gamma function $\Gamma_q(x)$ is defined by \cite[(1.1)]{Askey}, \cite[(1.10.1)]{GR}
\begin{equation}\label{eq:Gammaq-defined}
\Gamma_q(x)=(1-q)^{1-x}\prod_{n=0}^\infty\frac{1-q^{n+1}}{1-q^{x+n}}.
\end{equation}
Here $q\in(0,1)$ and  $A_i$, $\alpha_i$, $\beta_j$ and $B_j$ are strictly positive, while $a_i$ and $b_j$ are non-negative.
Investigations of the complete monotonicity of the ratios of $q$-gamma functions are not as numerous as of their classical (i.e. $q=1$) counterparts. Probably, the first appearance is encountered in the 1986 paper by Ismail, Lorch and  Muldoon \cite[Theorem~2.5, Theorem~6.1(vi)]{IsmLorchMuld} with further results in \cite[Theorem~2.5]{IsmailMuldoon94} (we refer to the corrected 2013 version), extending a previous result by Bustoz and Ismail pertaining to $q=1$ case. Further combinations involving the ratio of two $q$-gamma functions were studied by the same authors in \cite{IsmailMuldoon2013}.  Grinshpan and Ismail considered $p=s$, $\alpha_i=\beta_j=A_i=B_j=1$ case of \eqref{eq:Wq} in \cite{GrinIsm}, where some sufficient conditions for the logarithmic completely monotonicity were given (see further comments regarding this paper in Example~1 in Section 2 below). Some completely monotonic combinations of $q$-gamma and $q$-digamma functions were also considered in \cite{Gao}.

In this paper we give necessary and sufficient conditions for the function $W_q$ to be logarithmically completely monotonic and furnish several examples of how these conditions can be satisfied.  The $q$ case turns out to be significantly different from the classical $q=1$ case.  For example, the balancing condition $\sum_{i=1}^{p}\alpha_{i}A_{i}=\sum_{j=1}^{s}\beta_{j}B_{j}$ necessary in the classical situation may be violated when $q\in(0,1)$ without destroying the logarithmic complete monotonicity of $W_q$; further for $q\in(0,1)$ the arithmetic properties of the numbers $A_i$, $B_j$ play the key role in determining whether $W_q$ can be logarithmically completely monotonic.

In the second part of the paper we consider the $q=1$ case of \eqref{eq:Wq} which extends \eqref{eq:KP2016} (since $\Gamma_q(x)\to\Gamma(x)$ as $q\uparrow1$, see \cite[Theorem~4.1]{Askey}). We show first that the results of \cite{KPCMFT2016} can be extended \emph{mutatis mutandis} to this situation. In general, the necessary and sufficient conditions for logarithmic complete monotonicity are difficult to verify.  We give several new examples which are completely monotonic under very simple and explicit conditions. We further demonstrate how these examples can be used to prove monotonicity of certain gamma ratios and rational functions.

We conclude this introduction by recalling some definitions to be used below.
The standard notation $\Gamma(x)$ and $\psi (x)$ will be used for Euler's gamma function and it logarithmic derivative (known as digamma or psi function), respectively.   The following integral representation holds \cite[(1.1)]{AlzBerg}
\begin{equation}\label{eq:psi(x)}
\psi(x)=-\gamma+\int_0^\infty \frac{e^{-t}-e^{-xt}}{1-e^{-t}}dt,
\end{equation}
where $\gamma=0.57721\ldots$ denotes the Euler-Mascheroni constant. Polygamma functions are
\begin{equation}\label{eq:psi^k(x)}
\psi^{(k)}(x)=(-1)^{k+1}\int_0^\infty \frac{t^ke^{-xt}}{1-e^{-t}}dt.
\end{equation}

The $q$-gamma function is defined in \eqref{eq:Gammaq-defined} and its logarithmic derivative (or $q$-digamma function) $\psi_q(x)$ can be computed for $q\in(0,1)$, $x>0$, by the formulas \cite[(3.3)]{Askey},  \cite[(1.3),(1.4)]{IsmailMuldoon2013}
\begin{align}\label{eq:psiq-defined}
\psi_q(x)=\frac{\Gamma'_q(x)}{\Gamma_q(x)}&=-\log(1-q)+\log(q)\sum_{n=0}^\infty\frac{q^{n+x}}{1-q^{n+x}}
\nonumber\\
&=-\log(1-q)+\log(q)\sum_{n=1}^\infty\frac{q^{nx}}{1-q^{n}}.
\end{align}
The second expression can be rewritten as a Stieltjes integral \cite[(1.5)]{IsmailMuldoon2013}
\begin{equation}\label{eq5}
\psi_q(x)=-\log(1-q)-\int_0^\infty\frac{e^{-xt}}{1-e^{-t}}d\gamma_q(t),
\end{equation}
where
$$
\gamma_q(t)= \left\{
\begin{array}{cc}
\log(1/q)\sum\limits_{k=1}^{\infty} \delta_{k\log(1/q)}, & 0<q<1
\\[10pt]
t, & q=1.
\end{array}
\right. \\
$$
and $\delta_{x}$ denotes the unit mass at the point $x$ so that $d\gamma_q(t)$ is a discrete measure with positive masses $\log(1/q)$ at the positive points $k\log(1/q)$,  $k\ge1$.

Differentiating both sides of (\ref{eq:psiq-defined}) and (\ref{eq5}), respectively,  yields
\begin{equation}\label{eq:psiDk}
\psi_q^{(k)}(x)=[\log(q)]^{k+1}\sum_{n=1}^\infty\frac{n^kq^{nx}}{1-q^{n}}
=(-1)^{k+1}\int_0^\infty\frac{t^ke^{-xt}}{1-e^{-t}}d\gamma_q(t),\quad\quad q\in(0,1), x>0.
\end{equation}
Further details can be found in \cite{IsmailMuldoon94,IsmailMuldoon2013}.

Finally, we remark that the definition of a completely monotonic function leads immediately to the following equivalences
\begin{align}\label{eq:c.m}
f\ \text{is c.m. on } (0,\infty) &\Leftrightarrow f\geq0\ \text{and} -f'\  \text{is c.m. on } (0,\infty)\nonumber\\
&\Leftrightarrow -f'\  \text{is c.m. on } (0,\infty)\ \text{and} \lim_{x\rightarrow\infty}f(x)\geq0.
\end{align}
In view of (\ref{eq:c.m}) for logarithmically completely monotonic functions we have:
\begin{align}\label{eq:l.c.m}
f\ \text{is l.c.m. on } (0,\infty) &\Leftrightarrow (-\log f(x))'\geq0\ \text{and}\ (\log f)''\  \text{is c.m. on } (0,\infty)\nonumber\\
&\Leftrightarrow (\log f)''\  \text{is c.m. on } (0,\infty)\ \text{and}\ \lim_{x\rightarrow\infty}(-\log f(x))'\geq0.
\end{align}

\section{Ratios of $q$-gamma functions}
In this section we consider the function $x\to W_q(x)$ defined in \eqref{eq:Wq} with $q\in(0,1)$ and  $A_i$, $\alpha_i$, $\beta_j$ and $B_j$ being strictly positive scaling factors, while $a_i$ and $b_j$ are assumed to be non-negative.  Given the numbers $A_i$ and $B_j$ and $0<q<1$, define two multi-sets of positive numbers by
$$
\mathcal{A}=\left\{nA_i\log(1/q): n\in\N, i=1,\ldots,p\right\}
$$
$$
\mathcal{B}=\left\{mB_j\log(1/q): m\in\N, j=1,\ldots,s\right\}.
$$
We will write $\hat{\mathcal{A}}$, $\hat{\mathcal{B}}$ for the sets obtained by removing the repeated elements in the multi-sets $\mathcal{A}$, $\mathcal{B}$, respectively. Define further the (non-negative) measure $\mu$ supported on $\hat{\mathcal{A}}$ by
\begin{equation}\label{eq:mu-defined}
\mu=\sum_{i=1}^{p}\sum_{n=1}^\infty \frac{n}{1-q^n}\alpha_iA_i^2q^{na_i}\delta_{nA_i\log(1/q)},
\end{equation}
where $\delta_{a}$ denotes the point mass concentrated at the point $a$.  Similarly, define
the (non-negative) measure $\sigma$ supported on $\hat{\mathcal{B}}$ by
\begin{equation}\label{eq:sigma-defined}
\sigma=\sum_{j=1}^{s}\sum_{m=1}^\infty \frac{m}{1-q^m}\beta_jB_j^2q^{mb_j}\delta_{mB_j\log(1/q)}.
\end{equation}

\begin{thm}\label{th:2derivative}
Suppose $A_i$, $B_j$, $\alpha_i$ and $\beta_j$ are strictly positive, while $a_i$ and $b_j$ are non-negative. The function $(\log W_q)''$ is completely monotonic if and only if
$\tau:=\mu-\sigma$ is a non-negative measure, which, in turn, is equivalent to the following two conditions:

\emph{(1)} $\hat{\mathcal{B}}\subset\hat{\mathcal{A}}~\Leftrightarrow~\exists\: n_1,\ldots,n_s\in\N$ such that
$B_1=n_1A_{i_1}$, $B_2=n_2A_{i_2}$, $\ldots$, $B_s=n_sA_{i_s}$, for some $i_1,\ldots, i_s\in \{1,2,\ldots,p\}$ \emph{(}the indices $i_r,i_l$ may coincide\emph{)}.

\emph{(2)} for each $t\in\hat{\mathcal{B}}$ we have
\begin{equation}\label{eq:masscondition}
\sum\limits_{\substack{{n,i}\\{nA_i\log(1/q)=t}}}\frac{n}{1-q^n}\alpha_iA_i^2q^{na_i}
\ge\sum\limits_{\substack{{m,j}\\{mB_j\log(1/q)=t}}}\frac{m}{1-q^m}\beta_jB_j^2q^{mb_j}.
\end{equation}

In the affirmative case we have
\begin{equation}\label{eq:logW2prime}
(\log W_q)''=(\log q)^2\int_0^\infty e^{-xy} \tau(dy).
\end{equation}
\end{thm}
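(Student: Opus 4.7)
The plan is to derive the Laplace representation \eqref{eq:logW2prime} directly by differentiation, and then to read off the equivalent conditions atom by atom. I would start by taking the logarithm of \eqref{eq:Wq} and differentiating twice to obtain
\[
(\log W_q)''(x)=\sum_{i=1}^p\alpha_iA_i^2\psi_q'(A_ix+a_i)-\sum_{j=1}^s\beta_jB_j^2\psi_q'(B_jx+b_j),
\]
then insert the $k=1$ case of \eqref{eq:psiDk} and make the change of parameter $y=nA_i\log(1/q)$ (resp.\ $y=mB_j\log(1/q)$), which turns each $q^{nA_ix}$ into $e^{-xy}$. Collecting exponentials and aggregating the contributions from pairs $(i,n)$ or $(j,m)$ that produce the same abscissa---exactly as in the definitions \eqref{eq:mu-defined}--\eqref{eq:sigma-defined}---yields \eqref{eq:logW2prime} with $\tau=\mu-\sigma$.

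\textbf{Both directions via Bernstein.} With \eqref{eq:logW2prime} in hand, the equivalence between complete monotonicity of $(\log W_q)''$ and $\tau\ge0$ follows from Bernstein's theorem. If $\tau\ge0$, then $(\log W_q)''$ is the Laplace transform of the non-negative measure $(\log q)^2\tau$ and is therefore c.m. Conversely, if $(\log W_q)''$ is c.m., Bernstein's theorem furnishes a unique non-negative Radon measure $\nu$ on $[0,\infty)$ representing it; rearranging \eqref{eq:logW2prime} as
\[
\int_0^\infty e^{-xy}\bigl(\nu+(\log q)^2\sigma\bigr)(dy)=(\log q)^2\int_0^\infty e^{-xy}\mu(dy)
\]
equates the Laplace transforms of two positive measures on $(0,\infty)$, so uniqueness of the Laplace transform forces $\nu+(\log q)^2\sigma=(\log q)^2\mu$, whence $\nu=(\log q)^2\tau\ge0$.

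\textbf{Conditions (1) and (2).} Because $\mu$ and $\sigma$ are purely atomic, $\tau\ge0$ decomposes into two elementary requirements: (a) $\sigma$ has no atom outside $\mathrm{supp}\,\mu$, i.e.\ $\hat{\mathcal{B}}\subset\hat{\mathcal{A}}$, and (b) at each common atom $t\in\hat{\mathcal{B}}$ the aggregated mass $\mu(\{t\})$ dominates $\sigma(\{t\})$, which is precisely \eqref{eq:masscondition}. The inclusion $\hat{\mathcal{B}}\subset\hat{\mathcal{A}}$ is equivalent to condition (1): specializing to $m=1$ extracts $B_j=n_jA_{i_j}$ for every $j$, and conversely once each $B_j$ has this form, $mB_j\log(1/q)=(mn_j)A_{i_j}\log(1/q)\in\hat{\mathcal{A}}$ for every $m\in\N$. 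I expect the main subtlety to lie in the bookkeeping at atoms with multiple $(n,i)$ or $(m,j)$ representations---this is why \eqref{eq:masscondition} is a single aggregated inequality per $t$ rather than an index-wise comparison; the differentiation, change of variable, and Laplace uniqueness step are otherwise routine.
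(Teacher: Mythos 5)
Your proposal is correct and follows essentially the same route as the paper: differentiate $\log W_q$ twice, insert the series for $\psi_q'$ from \eqref{eq:psiDk} to obtain the Laplace representation \eqref{eq:logW2prime} with $\tau=\mu-\sigma$, invoke Bernstein's theorem, and then read off conditions (1) and (2) atom by atom, with the same $m=1$ specialization for the necessity of $\hat{\mathcal{B}}\subset\hat{\mathcal{A}}$. Your explicit use of uniqueness of the Laplace transform in the converse direction (rearranging so that two genuinely non-negative measures are compared) is a slightly more careful justification than the paper's one-line appeal to Bernstein, but it is the same argument in substance.
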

\begin{proof} Differentiating the logarithm of \eqref{eq:Wq} twice and using \eqref{eq:psiDk}, we have
\begin{align}
	(\log W_q(x))''&=\sum_{i=1}^p\alpha_iA^2_i\psi'_q(A_ix+a_i)-\sum_{j=1}^s\beta_jB^2_j\psi'_q(B_jx+b_j)\nonumber\\
	&=\sum_{i=1}^p\alpha_iA^2_i\log^2 q\sum_{n=1}^\infty\frac{nq^{n(A_ix+a_i)}}{1-q^n}-\sum_{j=1}^s\beta_jB^2_j\log^2 q\sum_{n=1}^\infty\frac{nq^{n(B_jx+b_j)}}{1-q^n}.
\end{align}
This an be rewritten as
$$
(\log W_q(x))''=(\log q)^2\int_0^\infty e^{-xy} \tau(dy),
$$
where $\tau=\mu-\sigma$ with $\mu$ and $\sigma$ given by \eqref{eq:mu-defined} and \eqref{eq:sigma-defined}, respectively.
By Bernstein's theorem $(\log W_q(x))''$ is completely monotonic if and only if $\tau$ is a non-negative measure, which proves the first claim. To show that this is equivalent to the combination of conditions (1) and (2) note that nonnegativity of $\tau$ reduces to the requirement that each of the negative mass points $mB_j\log(1/q)$ is among the positive mass points $nA_i\log(1/q)$ and the mass at the latter point is greater or equal the mass at the former.  The first requirement amounts to $\hat{\mathcal{B}}\subset\hat{\mathcal{A}}$ in view of the definitions of $\hat{\mathcal{A}}$ and $\hat{\mathcal{B}}$, while the second one is precisely the inequality \eqref{eq:masscondition}.

Finally, for $\hat{\mathcal{B}}\subset\hat{\mathcal{A}}$ to hold it is necessary and sufficient that there exist $n_1,\ldots,n_s\in\N$ such that
\begin{equation}\label{eq:BviaA}
B_1=n_1A_{i_1}, B_2=n_2A_{i_2}, \ldots, B_s=n_sA_{i_s},
\end{equation}
for some $i_1,\ldots, i_s\in \{1,2,\ldots p\}$ (the indices $i_r,i_l$ may coincide). Indeed, we can take the part of $\hat{\mathcal{B}}$ corresponding to  $m=1$ to see that this condition is necessary. On the other hand, if this condition is satisfied it is clearly also sufficient as $mB_{k}=nA_{i_k}$ for $n=mn_k$.
\end{proof}

Conditions \eqref{eq:masscondition} may be simplified in the following case.  Suppose all ratios $B_i/B_j$, $i\ne{j}$, are irrational, so that $\mathcal{B}=\hat{\mathcal{B}}$ ($\mathcal{B}$ has no repeated elements) and suppose that
$A_i=B_i/n_i$ for $i=1,\ldots,s$, while all elements of $(A_{s+1},\ldots,A_p)$ are irrational with respect to any element of $(A_{1},\ldots,A_{s})$ (that is $A_k/A_l$ is irrational if $l\le{s}$, $k>s$). This implies that each equation $nA_i=mB_j$ is satisfied by a unique combination $(n,i)$, $(m,j)$. Then condition \eqref{eq:masscondition} reduces to a set of inequalities
$$
\frac{\alpha_{j}A_{j}q^{na_{j}}}{1-q^n}\ge\frac{\beta_jB_jq^{mb_j}}{1-q^m},\quad j=1,\ldots,s,\; m\in\N.
$$
Inserting $B_j=n_jA_{j}$, $n=mn_j$ here,  these conditions reduce to
$$
\frac{\alpha_{j}q^{mn_ja_{j}}}{1-q^{mn_j}}\ge\frac{\beta_jn_jq^{mb_j}}{1-q^m},\quad j=1,\ldots,s,\; m\in\N.
$$
Writing $\epsilon=q^m\in(0,q]$ we have
$$
\frac{\alpha_{j}\epsilon^{n_ja_{j}}}{1-\epsilon^{n_j}}\ge\frac{\beta_jn_j\epsilon^{b_j}}{1-\epsilon},\quad j=1,\ldots,s.
$$
or
$$
\frac{\alpha_{j}}{\beta_jn_j}\ge(1+\epsilon+\cdots+\epsilon^{n_j-1})\epsilon^{b_j-n_ja_{j}},\quad j=1,\ldots,s.
$$
Clearly, we need to assume $b_{j}\ge{n_ja_{j}}$ in order to satisfy this inequality for $\epsilon$ close to $0$ (that is for large $m$).  Further, under this assumption the maximum of the right hand side is attained for $m=1$ (or $\epsilon=q$), so it is sufficient to choose the parameters satisfying
\begin{equation}\label{eq:ABprime-condition}
\frac{\alpha_{j}}{\beta_j}\ge n_jq^{b_j-n_ja_{j}}\frac{1-q^{n_j}}{1-q},\quad j=1,\ldots,s.
\end{equation}
For any given values of $q\in(0,1)$ and $n_j$, we can choose $\beta_j$, $\alpha_{j}$, $b_j$ and $a_{j}$ satisfying these inequalities.

If $(\log W_q)''$ is completely monotonic on $(0,\infty)$ two types of behavior of $(\log W_q)'$ seem to be of interest: if $(\log W_q)'\ge0$ then $(\log W_q)'$ is known as a Bernstein function \cite{SSV}; if $(\log W_q)'\le0$ then $(-\log W_q)'$ is completely monotonic and, hence $W_q$ is logarithmically completely monotonic.
We will explore these two cases (with the main emphasis on the latter case) in the following two theorems.  For additional clarity recall that
$$
(\log W_q)'=\sum\limits_{i=1}^{p}\alpha_{i}A_{i}\psi_q(A_{i}x+a_i)-\sum\limits_{j=1}^{s}\beta_{j}B_{j}\psi_q(B_{j}x+b_j).
$$

\begin{thm}\label{th:WqBernstein}
The function $(\log W_q)'$ is a Bernstein function if and only if conditions \emph{(1)} and \emph{(2)} from Theorem~\emph{\ref{th:2derivative}} hold and
$$
\sum\limits_{i=1}^{p}\alpha_{i}A_{i}\psi_q(a_i)\ge\sum\limits_{j=1}^{s}\beta_{j}B_{j}\psi_q(b_j)
$$
\end{thm}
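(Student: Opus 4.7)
The plan is to reduce the theorem to Theorem~\ref{th:2derivative} combined with a limit computation at $x\to 0^+$. First, I would recall that by definition a function $f$ on $(0,\infty)$ is a Bernstein function if and only if $f\ge 0$ and $f'$ is completely monotonic (see \cite{SSV}). Applied to $f=(\log W_q)'$, this says that $(\log W_q)'$ is Bernstein if and only if $(\log W_q)'\ge 0$ on $(0,\infty)$ and $(\log W_q)''$ is c.m.\ on $(0,\infty)$. By Theorem~\ref{th:2derivative}, the second requirement is exactly conditions~(1) and~(2), so the only thing left is to translate the pointwise nonnegativity of $(\log W_q)'$ into the stated boundary inequality.

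To do this, I would observe that under~(1) and~(2) the representation \eqref{eq:logW2prime} shows $(\log W_q)''\ge 0$, so that $(\log W_q)'$ is non-decreasing on $(0,\infty)$. In particular,
\[
\inf_{x>0}(\log W_q)'(x)=\lim_{x\to 0^+}(\log W_q)'(x),
\]
and $(\log W_q)'\ge 0$ everywhere is equivalent to this one-sided limit being nonnegative. Using the explicit formula
\[
(\log W_q)'(x)=\sum_{i=1}^p\alpha_iA_i\psi_q(A_ix+a_i)-\sum_{j=1}^s\beta_jB_j\psi_q(B_jx+b_j)
\]
together with the continuity of $\psi_q$ at each positive point, this limit equals $\sum_i\alpha_iA_i\psi_q(a_i)-\sum_j\beta_jB_j\psi_q(b_j)$, which is the inequality in the theorem.

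The \emph{main delicate step} is the boundary behavior when some $a_i$ or $b_j$ equals~$0$, since $\psi_q(y)\to-\infty$ as $y\to 0^+$, with leading behavior $\psi_q(y)\sim -1/y$ read off from \eqref{eq:psiq-defined}. In that case the corresponding $\psi_q(a_i)$ or $\psi_q(b_j)$ must be interpreted as $-\infty$ and the inequality in the theorem in the extended-real sense; a finer matching of the $-1/x$ singularities of the two sums then determines whether $\lim_{x\to 0^+}(\log W_q)'(x)$ is $+\infty$, finite, or $-\infty$, and the inequality precisely captures the case $\ge 0$. Once this edge case has been handled, both directions of the theorem are immediate from the monotonicity of $(\log W_q)'$ on $(0,\infty)$ combined with Theorem~\ref{th:2derivative}.
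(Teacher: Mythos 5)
Your proposal is correct and follows essentially the same route as the paper: reduce complete monotonicity of $(\log W_q)''$ to conditions (1) and (2) via Theorem~\ref{th:2derivative}, then use the resulting monotonicity of $(\log W_q)'$ to convert its global nonnegativity into nonnegativity of the limit as $x\to0^+$, which is the stated inequality. Your additional remark about interpreting $\psi_q(a_i)$ or $\psi_q(b_j)$ as $-\infty$ when a parameter vanishes is a sensible refinement that the paper passes over in silence, but it does not change the argument.
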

\noindent\textbf{Proof.}  As $(\log W_q)''\ge0$ by Theorem~\ref{th:2derivative}, the function $(\log W_q)'$ is increasing. Hence, $(\log W_q)'\ge0$ if and only if $\lim\limits_{x\to0}(\log W_q(x))'\ge0$. This is exactly the condition of the theorem. $\hfill\square$

\begin{thm}\label{th:WqLCM}
The function $W_q(x)$ is logarithmically completely monotonic if and only if conditions \emph{(1)} and \emph{(2)} from Theorem~\emph{\ref{th:2derivative}} hold and
\begin{equation}\label{eq:LCMcondition}
\sum\limits_{i=1}^{p}\alpha_{i}A_{i}\le\sum\limits_{j=1}^{s}\beta_{j}B_{j}.
\end{equation}
\end{thm}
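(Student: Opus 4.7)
The plan is to apply the second equivalence in \eqref{eq:l.c.m} to $W_q$: namely, $W_q$ is l.c.m. on $(0,\infty)$ if and only if $(\log W_q)''$ is completely monotonic on $(0,\infty)$ and $\lim_{x\to\infty}(-\log W_q(x))'\ge 0$. (Positivity of $W_q$ is automatic since each $\Gamma_q(A_ix+a_i)$ and $\Gamma_q(B_jx+b_j)$ is positive.) The complete monotonicity of $(\log W_q)''$ is controlled by Theorem~\ref{th:2derivative}, which gives immediately that it is equivalent to conditions (1) and (2). Thus the only work left is to identify the limit condition with the balancing inequality~\eqref{eq:LCMcondition}.

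To do this I would compute $\lim_{x\to\infty}(\log W_q(x))'$ explicitly. Using the series representation \eqref{eq:psiq-defined},
\[
\psi_q(y)=-\log(1-q)+\log(q)\sum_{n=1}^\infty\frac{q^{ny}}{1-q^n},
\]
and since $q\in(0,1)$, each term $q^{ny}$ decays to $0$ as $y\to\infty$; an easy majorant (e.g.\ $q^{ny}/(1-q^n)\le q^{ny}/(1-q)$) together with dominated convergence shows that $\psi_q(y)\to-\log(1-q)$ as $y\to\infty$. Substituting into
\[
(\log W_q(x))'=\sum_{i=1}^{p}\alpha_iA_i\psi_q(A_ix+a_i)-\sum_{j=1}^{s}\beta_jB_j\psi_q(B_jx+b_j),
\]
yields
\[
\lim_{x\to\infty}(\log W_q(x))'=-\log(1-q)\left(\sum_{i=1}^{p}\alpha_iA_i-\sum_{j=1}^{s}\beta_jB_j\right).
\]
Because $-\log(1-q)>0$ for $q\in(0,1)$, the condition $\lim_{x\to\infty}(-\log W_q(x))'\ge 0$ is equivalent to $\sum_{i=1}^{p}\alpha_iA_i\le\sum_{j=1}^{s}\beta_jB_j$, which is precisely \eqref{eq:LCMcondition}.

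Combining: $W_q$ is l.c.m.\ iff $(\log W_q)''$ is c.m.\ (equivalently, (1) and (2) hold by Theorem~\ref{th:2derivative}) together with the limit condition (equivalently, \eqref{eq:LCMcondition}). I do not anticipate a genuine obstacle here; the only mildly delicate point is the justification of the term-by-term limit in $\psi_q$, which is handled by a crude geometric-series bound uniform in the tail. Everything else is a direct reduction to Theorem~\ref{th:2derivative} and the standard c.m.\ characterization \eqref{eq:l.c.m}.
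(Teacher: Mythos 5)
Your proposal is correct and follows essentially the same route as the paper: both reduce the statement to Theorem~\ref{th:2derivative} plus the sign of $\lim_{x\to\infty}(-\log W_q)'$, and both evaluate that limit via the series representation \eqref{eq:psiq-defined} of $\psi_q$, obtaining $\bigl(\sum_j\beta_jB_j-\sum_i\alpha_iA_i\bigr)\log\bigl(1/(1-q)\bigr)$. Your extra care in justifying $\psi_q(y)\to-\log(1-q)$ by a geometric majorant is a welcome detail the paper leaves implicit.
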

\noindent\textbf{Proof.}  Suppose  $W_q(x)$ is logarithmically completely monotonic, so
that $(-\log W_q)'$ is completely monotonic. This  implies that $(\log W_q)''$  is completely monotonic and by Theorem~\ref{th:2derivative} conditions (1) and (2) of this theorem must be satisfied.  Further, as $(-\log W_q)'$ must be non-negative and decreasing, we necessarily get with the help of (\ref{eq:psiq-defined})
$$
\lim\limits_{x\to\infty}(-\log W_q(x))'=
\left(\sum\limits_{j=1}^{s}\beta_{j}B_{j}-\sum\limits_{i=1}^{p}\alpha_{i}A_{i}\right)\log\left(\frac{1}{1-q}\right)\ge0
$$
which is \eqref{eq:LCMcondition}.  In opposite direction conditions (1) and (2) imply that  $(\log W_q)''$  is completely monotonic by Theorem~\ref{th:2derivative}, so that  $(-\log W_q)'$ is decreasing. Then condition \eqref{eq:LCMcondition} implies in view of the above limit that $(-\log W_q)'\ge0$ and hence is completely monotonic. $\hfill\square$

The above theorem shows that the situation in the $q$-case is substantially different from the $q=1$ case, where condition \eqref{eq:LCMcondition} must be satisfied with equality sign.
Below, we present three examples.

\textbf{Example~1.} In \cite{GrinIsm}  Grinshpan and Ismail considered the ratio \eqref{eq:Wq} with $A_i=B_j=1$:
$$
F_q(x)=\frac{\prod_{i=1}^p\Gamma^{\alpha_i}_q(x+a_i)}{\prod_{j=1}^s\Gamma^{\beta_j}_q(x+b_j)}
$$
(we follow our notation which is slightly different from the notation of \cite{GrinIsm}).  In this case the measure $\mu$ from \eqref{eq:mu-defined} assigns the mass
$$
\sum_{i=1}^{p} \frac{n}{1-q^n}\alpha_iq^{na_i}
$$
to the point $n\log(1/q)$ for each $n\in\N$, while the measure $\sigma$ from \eqref{eq:sigma-defined} assigns the mass
$$
\sum_{j=1}^{s}\frac{n}{1-q^n}\beta_jq^{nb_j}
$$
to the same point, so that condition (1) from Theorem~\ref{th:2derivative} is automatically satisfied, while condition (2) reduces to
$$
\sum\limits_{i=1}^{p}\alpha_iq^{na_i}-\sum\limits_{j=1}^{s}\beta_jq^{nb_j}=v(q^n)\ge0
$$
for each $n\in\N$, where
$$
v(t)=\sum\limits_{i=1}^{p}\alpha_it^{a_i}-\sum\limits_{j=1}^{s}\beta_jt^{b_j}.
$$
In particular, $v(t)\ge0$ on $(0,q)$ is sufficient.  The condition \eqref{eq:LCMcondition} takes the form
$$
\sum\limits_{i=1}^{p}\alpha_{i}\le\sum\limits_{j=1}^{s}\beta_{j}.
$$
Hence, the above inequality and the condition $v(q^n)\ge0$ for all $n\in\N$ are necessary and sufficient for $F_q$ to be logarithmically completely monotonic.  This refines \cite[Lemma~3.1]{GrinIsm}, where the claim is that $F_q$ is logarithmically completely monotonic if $\sum_{i=1}^{p}\alpha_{i}=\sum_{j=1}^{s}\beta_{j}$ and $v(t)\ge0$ on $(0,1)$.

Note that the above inequality may be strict without contradicting  $v(t)\ge0$ on $(0,q)$. For instance,
$$
x\to\frac{\Gamma^{\alpha}_q(x+a)}{\Gamma^{\beta}_q(x+b)}
$$
is logarithmically completely monotonic for $\beta>\alpha$ if $\alpha/\beta\ge{q^{b-a}}$ which can be attained for any $q\in(0,1)$ by choosing $b$ sufficiently large.

\textbf{Example~2.} Take $A_1=1/6$, $\alpha_1=5$, $B_1=1/3$, $B_2=1/2$, $\beta_1=\beta_2=1$, or
$$
W_q(x)=\frac{\Gamma_{q}^5(x/6+a)}{\Gamma_{q}(x/3+b)\Gamma_{q}(x/2+c)}.
$$
Note that $\alpha_1A_1=\beta_1B_1+\beta_2B_2$. We will try to choose $a$, $b$, $c$ in order that $(\log{W_q})''$ is completely monotonic, that is we try to satisfy \eqref{eq:masscondition}.
Denote $\delta=\log(1/q)>0$. Then we have
$$
\mathcal{A}=\left\{\delta{n}/6: n\in\N\right\},~~~
\mathcal{B}=\left\{\delta{m_1}/3, \delta{m_2}/2: m_1,m_2\in\N\right\}.
$$
All elements of $\mathcal{A}$ are simple (not repeated), but the elements $t$ of $\mathcal{B}$ of the form $t=\delta{j}$, $j\in\N$, have multiplicity $2$, as both values $m_1=3j$ and $m_2=2j$, $j\in\N$, give $t=\delta{j}$.  Hence, for $t=\delta{m_1}/3=\delta{n}/6$, $m_1\ne0\!\!\mod\!3$, we have $n=2m_1$ and condition \eqref{eq:masscondition} reduces to
$$
\frac{2m_1}{1-q^{2m_1}}5(1/6)^2q^{2m_1a}\ge\frac{m_1}{1-q^{m_1}}(1/3)^2q^{m_1b}~\Leftrightarrow~
5/2\ge(1+q^{m_1})q^{m_1(b-2a)}
$$
We have to assume $b-2a\ge0$ to satisfy this inequality for large $m_1$.  Assuming this, we see that the right hand side is decreasing as $m_1$ is increasing. Hence, it is sufficient to satisfy this inequality for $m_1=1$ or
$$
5/2\ge(1+q)q^{b-2a}~\Leftrightarrow~b-2a\ge\frac{\log[2(1+q)/5]}{\log(1/q)}.
$$
The last inequality is true for all  $b-2a\ge0$.

Next for $t=\delta{m_2}/2=\delta{n}/6$, $m_2\ne0\!\!\mod\!2$, we have $n=3m_2$ and condition \eqref{eq:masscondition} reduces to
$$
\frac{3m_2}{1-q^{3m_2}}5(1/6)^2q^{3m_2a}\ge\frac{m_2}{1-q^{m_2}}(1/2)^2q^{m_2c}~\Leftrightarrow~
5/3\ge(1+q^{m_2}+q^{2m_2})q^{m_2(c-3a)}.
$$
We have to assume $c-3a\ge0$ to satisfy this inequality for large $m_2$.  Assuming this, we see that the right hand side is decreasing as $m_2$ is increasing. Hence, it is sufficient to satisfy this inequality for $m_2=1$ or
$$
5/3\ge(1+q+q^2)q^{c-3a}~\Leftrightarrow~c-3a\ge\frac{\log[3(1+q+q^2)/5]}{\log(1/q)}.
$$
We can always take $c$ large enough to satisfy this inequality.

Finally take the points $t\in\mathcal{B}$ of the form $t=\delta{j}$, so that $\delta{m_1}/3=\delta{n}/6$ and $\delta{m_2}/2=\delta{n}/6$ (i.e. $m_1=3,6,9,\ldots$ and $m_2=2,4,6,\ldots$).  Condition \eqref{eq:masscondition} takes the form:
$$
\frac{n}{1-q^n}5(1/6)^2q^{na}\ge\frac{m_1}{1-q^{m_1}}(1/3)^2q^{m_1b}+\frac{m_2}{1-q^{m_2}}(1/2)^2q^{m_2c}.
$$
As $m_1=n/2$, $m_2=n/3$ this can be put into the form:
$$
5\ge2(1+q^{n/2})q^{n(b-2a)/2}+3(1+q^{n/3}+q^{2n/3})q^{n(c-3a)/3}.
$$
We have to assume again $b-2a\ge0$ and $c-3a\ge0$. Under this assumption the right hand side decreases as $n$ grows.  So it suffices to take the minimum value of $n=6$ (this is minimal value producing integer $n/6$).  So, the required inequality takes the form
$$
5\ge2(1+q^{3})q^{3(b-2a)}+3(1+q^{2}+q^{4})q^{2(c-3a)}
$$
It is clear that for any given $0<q<1$ we can choose $b-2a$ and $c-3a$ large enough to satisfy this inequality.  We finally conclude that for $a$, $b$, $c$ satisfying the above inequality and such that
$$
b\ge{2a}, ~~~~c\ge 3a+\frac{\log[3(1+q+q^2)/5]}{\log(1/q)},
$$
the function $W_q(x)$ is l.c.m.

\bigskip

\textbf{Example~3}. Take  $A_1=\sqrt{2}/2$, $A_2=\sqrt{3}/5$, $A_3=\pi$, $A_4=1$, $B_1=\sqrt{2}$, $B_2=\sqrt{3}$, so that
$B_1=2A_1$, $B_2=5A_3$, $\mathcal{B}$ has no repeated elements and, clearly, the elements of $(A_3,A_4)$ are irrational with respect to the elements of $(A_1,A_2)$.  Then according to  \eqref{eq:ABprime-condition} we need the parameters to satisfy
$$
\alpha_{1}/\beta_{1}\ge 2(1+q)q^{b_{1}-2a_{1}}~~~\text{and}~~~\alpha_{2}/\beta_{2}\ge 5q^{b_{2}-5a_{2}}(1-q^{5})/(1-q)
$$
in order that $(\log W_{q})''$ be completely monotonic.  Finally, according to condition \eqref{eq:LCMcondition} of Theorem~\ref{th:WqLCM} the function $W_{q}$ is logarithmically completely monotonic if, in addition to the above conditions, we have
$$
\alpha_{1}\sqrt{2}/2+\alpha_{2}\sqrt{3}/5+\alpha_{3}\pi+\alpha_{4}\le \beta_{1}\sqrt{2}+\beta_2\sqrt{3}.
$$
For instance,  for $q=1/2$, the function
$$
x\to\frac{\Gamma_{q}(x/\sqrt{2}+a_1)\Gamma_{q}(x\sqrt{3}/5+a_2)\Gamma_{q}(x\pi+a_3)\Gamma_{q}(x+a_4)}
{\Gamma_{q}^2(x\sqrt{2}+2a_{1}+3)\Gamma_{q}^2(x\sqrt{3}+5a_{2}+5)}
$$
is logarithmically completely monotonic for any $a_1,a_2,a_3,a_4\ge0$.

\section{The $q=1$ case revisited}

In this section we will study the function
\begin{equation}\label{eq:V-defined}
V(x)=\theta^{-x}\frac{\prod_{i=1}^p\Gamma^{\alpha_{i}}(A_{i}x+a_i)}{\prod_{j=1}^s\Gamma^{\beta_{j}}(B_{j}x+b_j)}
\end{equation}
which generalizes the function $W$ from \cite[section~3]{KPCMFT2016}. Here $A_i$, $B_j$, $\alpha_i$, $\beta_j$ and $\theta$ will always be assumed strictly positive, while $a_i$ and $b_j$ are non-negative.  Letting $q\uparrow1$ in \eqref{eq:Wq}
we get the $\theta=1$ case of $V(x)$.   A particular case of $V(x)$ has been recently demonstrated to be l.c.m. in \cite[Proof of Theorem~1.5]{Zhu} as a part of an investigation of Stieltjes moment sequences.

The following result is a straightforward generalization of \cite[Lemma~1]{KPCMFT2016}.  The proof repeats the proof of \cite[Lemma~1]{KPCMFT2016} \emph{mutatis mutandis} an will be omitted.
\begin{lem}\label{lm:V2prime-cm}
The function $(\log V)''$ is completely monotonic if and only if
\begin{equation}\label{eq:Q(u)}
Q(u):=\sum_{i=1}^p\frac{\alpha_{i}e^{-a_iu/A_i}}{1-e^{-u/A_i}}-\sum_{j=1}^s\frac{\beta_{j}e^{-b_ju/B_j}}{1-e^{-u/B_j}}\ge0
~\text{for all}~u>0.
\end{equation}
In the affirmative case
\begin{equation}\label{eq:logW2prime-int}
(\log V(x))''=\int_0^\infty e^{-ux}uQ(u)du.
\end{equation}
\end{lem}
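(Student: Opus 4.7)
The plan is to compute $(\log V)''$ explicitly as a Laplace integral and then apply Bernstein's theorem. The prefactor $\theta^{-x}$ contributes only a linear term $-x\log\theta$ to $\log V$, which disappears after differentiating twice, so the argument is essentially the same as for the $\theta=1$ case treated in \cite{KPCMFT2016}.

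First I would write $\log V(x) = -x\log\theta + \sum_{i=1}^p\alpha_i\log\Gamma(A_ix+a_i) - \sum_{j=1}^s\beta_j\log\Gamma(B_jx+b_j)$ and differentiate twice to get
$$
(\log V)''(x) = \sum_{i=1}^p\alpha_iA_i^2\,\psi'(A_ix+a_i) - \sum_{j=1}^s\beta_jB_j^2\,\psi'(B_jx+b_j).
$$
Next I would insert the integral representation (\ref{eq:psi^k(x)}) for $\psi'$ into each summand and perform the substitution $u=A_it$ in the $i$-th term (respectively $u=B_jt$ in the $j$-th). In the $i$-th term this converts $e^{-(A_ix+a_i)t}$ into $e^{-ux}e^{-a_iu/A_i}$, and the Jacobian factor together with $t\,dt \mapsto (u/A_i^2)\,du$ combines with the prefactor $\alpha_iA_i^2$ to yield exactly $\alpha_i u\,e^{-a_iu/A_i}/(1-e^{-u/A_i})$. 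After the analogous substitution in the $j$-sum, all $p+s$ integrals share the same kernel $e^{-ux}$ over the common domain $(0,\infty)$, so they can be collected under a single integral sign to give
$$
(\log V(x))'' = \int_0^\infty e^{-ux}\,uQ(u)\,du,
$$
which is the asserted formula (\ref{eq:logW2prime-int}) and explains why the grouping $a_iu/A_i$, $u/A_i$ appears in the definition of $Q$.

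Finally I would invoke Bernstein's theorem: the right-hand side is a Laplace transform of the measure $uQ(u)\,du$ on $(0,\infty)$, hence completely monotonic if and only if that measure is nonnegative, which (since $u>0$ on the support) is exactly the pointwise condition $Q(u)\geq 0$ for all $u>0$. There is no real obstacle; the only bookkeeping to watch is that the two changes of variables produce exactly matching exponential kernels and integration limits so that the sum passes under a single integral, and that the factors of $A_i$ (resp.\ $B_j$) cancel cleanly to yield the integrand $uQ(u)$.
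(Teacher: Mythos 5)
Your proof is correct and is essentially the argument the paper has in mind: the paper omits the proof of Lemma~\ref{lm:V2prime-cm}, stating that it repeats \cite[Lemma~1]{KPCMFT2016} \emph{mutatis mutandis}, and that argument is precisely your computation (twice differentiating, inserting \eqref{eq:psi^k(x)}, substituting $u=A_it$ resp.\ $u=B_jt$, and applying Bernstein's theorem with uniqueness of the representing measure). No gaps.
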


Theorem~4 from \cite{KPCMFT2016} admits the following generalization. A similar proof is also omitted.
\begin{thm}\label{th:Vlcm}
Let $x>0$ and  $A_i, B_j, \alpha_i, \beta_j$  be strictly positive and $a_i$ and $b_j$ be non-negative.
The function $V(x)$ is logarithmically  completely monotonic if and only if condition \eqref{eq:Q(u)} holds true and
\begin{equation}\label{eq:balance-enthropy}
\sum_{i=1}^p\alpha_iA_i=\sum_{j=1}^{s}\beta_jB_j,\quad \rho:=\prod_{i=1}^{p}A_{i}^{\alpha_iA_i}
\prod_{j=1}^{s}B_{j}^{-\beta_jB_j}\leq\theta.
\end{equation}
In the affirmative case
$$
(-\log V(x))'=\int_{0}^{\infty}e^{-xu}Q(u)du+\log(\theta/\rho).
$$
\end{thm}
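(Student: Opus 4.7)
The plan mirrors the proof of Theorem~\ref{th:WqLCM}, but with the crucial difference that the classical digamma function diverges logarithmically rather than tending to a constant, so the balancing identity in (\ref{eq:balance-enthropy}) emerges as the precise condition for cancellation of the divergent $\log x$ terms. Using the characterization (\ref{eq:l.c.m}), the problem splits into two independent checks: $(\log V)''$ must be c.m., and $\lim_{x\to\infty}(-\log V(x))'$ must exist and be non-negative. The first is handled by Lemma~\ref{lm:V2prime-cm} and is equivalent to $Q\ge 0$, so everything reduces to an asymptotic analysis of the digamma combination.

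First I would differentiate the logarithm of (\ref{eq:V-defined}) to obtain
$$
(-\log V(x))' = \log\theta - \sum_{i=1}^p \alpha_i A_i\, \psi(A_i x + a_i) + \sum_{j=1}^s \beta_j B_j\, \psi(B_j x + b_j),
$$
and then insert the standard expansion $\psi(y) = \log y + O(1/y)$ as $y\to\infty$. Collecting terms should yield
$$
(-\log V(x))' = \log(\theta/\rho) - \Big(\sum_{i=1}^p \alpha_i A_i - \sum_{j=1}^s \beta_j B_j\Big)\log x + O(1/x),
$$
with $\rho$ as in (\ref{eq:balance-enthropy}). Since every completely monotonic function is decreasing and tends to a finite non-negative limit at infinity, finiteness of the limit forces the balancing equality, and under balancing the limit equals $\log(\theta/\rho)$, so non-negativity forces $\theta\ge\rho$. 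Combined with Lemma~\ref{lm:V2prime-cm}, this establishes the necessity of all three conditions.

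For sufficiency, I would start from $Q\ge 0$, balancing and $\theta\ge\rho$, and integrate the Laplace representation (\ref{eq:logW2prime-int}) from $x$ to $\infty$. Interchanging the order of integration on the non-negative integrand $uQ(u)e^{-yu}$ gives
$$
\int_x^\infty (\log V(y))''\,dy = \int_0^\infty e^{-xu} Q(u)\,du,
$$
while the asymptotic computation above, valid under balancing, identifies $\lim_{y\to\infty}(\log V(y))' = \log\rho - \log\theta$. Rearranging produces the claimed representation
$$
(-\log V(x))' = \int_0^\infty e^{-xu} Q(u)\,du + \log(\theta/\rho),
$$
which under the assumptions is a Laplace transform of a non-negative measure plus a non-negative constant, hence completely monotonic, so $V$ is l.c.m.

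The main obstacle is the careful handling of the asymptotic step, ensuring the $O(1/x)$ remainders are uniform enough that the leading $\log x$ coefficient and the constant term can be unambiguously extracted so as to pin down both the balancing identity and the value $\log(\theta/\rho)$; once this is done, the rest is a routine adaptation of the proof of Theorem~\ref{th:WqLCM}, with the constant horizontal asymptote $-\log(1-q)$ of $\psi_q$ replaced by the logarithmic growth of $\psi$.
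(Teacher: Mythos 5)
Your proposal is correct and follows essentially the intended route: the paper omits this proof, deferring to Theorem~4 of \cite{KPCMFT2016}, whose argument is exactly what you reconstruct --- reduce the second-derivative condition to $Q\ge0$ via Lemma~\ref{lm:V2prime-cm}, use $\psi(y)=\log y+O(1/y)$ to show that a finite, non-negative limit of $(-\log V)'$ at infinity forces the balancing identity and $\rho\le\theta$, and then integrate the Laplace representation of $(\log V)''$ to obtain the stated formula for $(-\log V)'$. No gaps; the Tonelli interchange you invoke is justified since $uQ(u)e^{-yu}\ge0$.
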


We find the necessary conditions for logarithmic complete monotonicity of $V$ in the following corollary. The proof goes along the same lines as the proof of \cite[Corollary~1]{KPCMFT2016} and will be omitted.
\begin{cor}\label{cr:necessary}
In addition to \eqref{eq:balance-enthropy} the following conditions are necessary for $V$ to be logarithmically completely monotonic\emph{:}
\begin{itemize}
\item[ \emph{(a)}] $\sum_{j=1}^s\beta_j(b_j-\frac{1}{2})-\sum_{i=1}^p\alpha_i(a_i-\frac{1}{2})\geq0$\emph{;}
\item[ \emph{(b)}] $\min_{1\le{i}\le{p}}(a_i/A_i)\leq\min_{1\le{j}\le{s}}(b_j/B_j)$; in case of equality it is further necessary that $\sum_{k\in{I}}\alpha_k\ge\sum_{m\in{J}}\beta_{m}$, where $I$, $J$ are the sets of indices for which minima are attained on the left and on the right, respectively.
\end{itemize}	
\end{cor}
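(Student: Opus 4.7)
The plan is to derive conditions (a) and (b) from the pointwise inequality $Q(u)\ge 0$ on $(0,\infty)$ (which is necessary for $V$ to be l.c.m.\ by Lemma~\ref{lm:V2prime-cm}) combined with the balancing identity $\sum_{i=1}^p\alpha_iA_i=\sum_{j=1}^s\beta_jB_j$ from Theorem~\ref{th:Vlcm}. The two conditions emerge from the asymptotic behaviour of $Q$ at $u=0^+$ and at $u=\infty$ respectively.

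For (a), I would expand each summand of $Q$ as $u\to 0^+$ using
$$\frac{e^{-au/A}}{1-e^{-u/A}}=\frac{A}{u}+\frac{1}{2}-a+O(u),$$
which follows from the standard Taylor expansion $e^{-cv}/(1-e^{-v})=v^{-1}+1/2-c+O(v)$ after the substitution $v=u/A$, $c=a/A$. Collecting the contributions one obtains
$$Q(u)=\frac{1}{u}\Bigl(\sum_{i=1}^p\alpha_iA_i-\sum_{j=1}^s\beta_jB_j\Bigr)+\Bigl(\sum_{j=1}^s\beta_j(b_j-\tfrac{1}{2})-\sum_{i=1}^p\alpha_i(a_i-\tfrac{1}{2})\Bigr)+O(u).$$
The balancing identity from \eqref{eq:balance-enthropy} eliminates the singular $1/u$ term, and $Q(0^+)\ge 0$ becomes precisely statement (a).

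For (b), I would examine $Q(u)$ as $u\to\infty$. Since $(1-e^{-u/A_i})^{-1}=1+O(e^{-u/A_i})$, each summand is essentially a pure exponential with exponentially smaller correction:
$$Q(u)=\sum_{i=1}^p\alpha_i e^{-a_iu/A_i}-\sum_{j=1}^s\beta_j e^{-b_ju/B_j}+O\bigl(e^{-(a_\star+\eta)u}+e^{-(b_\star+\eta)u}\bigr)$$
for some $\eta>0$, where $a_\star:=\min_i(a_i/A_i)$ and $b_\star:=\min_j(b_j/B_j)$. Letting $I$ and $J$ denote the index sets on which these minima are attained, the leading exponential order of $Q(u)$ at infinity is $\bigl(\sum_{i\in I}\alpha_i\bigr)e^{-a_\star u}-\bigl(\sum_{j\in J}\beta_j\bigr)e^{-b_\star u}$. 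If $a_\star>b_\star$ this is eventually negative, contradicting $Q\ge 0$; hence $a_\star\le b_\star$. In the boundary case $a_\star=b_\star$ the leading coefficient is $\sum_{i\in I}\alpha_i-\sum_{j\in J}\beta_j$, which must be non-negative, yielding the refined condition in (b).

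No essential obstacle arises: the argument reduces to two short asymptotic computations of $Q$ at the endpoints $0$ and $\infty$, the first relying crucially on the cancellation furnished by the balancing identity. The only technical care needed is in correctly identifying which terms survive at leading order when several indices realize the minimum ratio.
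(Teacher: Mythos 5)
Your argument is correct and is essentially the proof the paper defers to (the analogue of \cite[Corollary~1]{KPCMFT2016}): condition (a) comes from the $u\to0^+$ expansion of $Q$, where the balance identity in \eqref{eq:balance-enthropy} cancels the $1/u$ singularity and leaves exactly the constant term of (a), while condition (b) comes from identifying the slowest-decaying exponential in $Q(u)$ as $u\to\infty$, noting that the secondary exponents $(a_i+k)/A_i$, $k\ge1$, never interfere with the leading one. Only a cosmetic slip: with the substitution $v=u/A$ the constant in your Taylor expansion is $c=a$ rather than $c=a/A$, though the displayed formula $\frac{A}{u}+\frac{1}{2}-a+O(u)$ is the correct one.
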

The following theorem providing some sufficient conditions is a generalization of \cite[Theorem~5]{KPCMFT2016} and has a similar proof.
\begin{thm}\label{th:sufficient-old}
Inequality \eqref{eq:Q(u)} is true if any of the following sets of conditions holds\emph{:}
\begin{itemize}
\item[\emph{(a)}] 	$\sum_{i=1}^p\alpha_iA_i=\sum_{j=1}^s\beta_jB_j$ and $\max_{1\leq i\leq p}(a_i/A_i)\leq \min_{1\leq j\leq s}(b_j-1)/B_j$;

\item[\emph{(b)}]  $p=s$, $\sum_{i=1}^p\alpha_iA_i=\sum_{i=1}^p\beta_iB_i$ with $\alpha_{i}A_i\ge\beta_{i}B_i$ for $i=1,\ldots,p-1$, and $\max_{1\le{j}\le{p-1}} b_{j}/B_{j}\le(b_{p}-1)/B_{p}$, $a_i/A_i\le(b_i-1)/B_i$ for $i=1,\ldots,p$.
\end{itemize}
\end{thm}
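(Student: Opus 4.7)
The overall plan is to verify the sufficient condition \eqref{eq:Q(u)} of Lemma~\ref{lm:V2prime-cm}, namely $Q(u)\ge 0$ for every $u>0$, under either set of hypotheses; logarithmic complete monotonicity of $(\log V)''$ then follows immediately from the lemma.

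For part (a), I would set $M:=\max_{1\le i\le p}(a_i/A_i)$. The hypotheses give $a_i/A_i\le M$ and $b_j/B_j\ge M+1/B_j$, which yield the elementary estimates $e^{-a_iu/A_i}\ge e^{-Mu}$ and $e^{-b_ju/B_j}\le e^{-Mu}e^{-u/B_j}$ for all $i,j$ and $u>0$. Pulling the common factor $e^{-Mu}$ out of both sums gives
$$Q(u)\ge e^{-Mu}\left[\sum_{i=1}^{p}\frac{\alpha_i}{1-e^{-u/A_i}}-\sum_{j=1}^{s}\frac{\beta_j}{e^{u/B_j}-1}\right]=:e^{-Mu}G(u).$$
Next I would use the balance condition $\sum_i\alpha_iA_i=\sum_j\beta_jB_j$ to add and subtract $(1/u)\sum_i\alpha_iA_i=(1/u)\sum_j\beta_jB_j$, rearranging $G(u)$ as
$$G(u)=\sum_{i=1}^{p}\alpha_i\left(\frac{1}{1-e^{-u/A_i}}-\frac{A_i}{u}\right)+\sum_{j=1}^{s}\beta_j\left(\frac{B_j}{u}-\frac{1}{e^{u/B_j}-1}\right).$$
Each bracket is strictly positive on $(0,\infty)$ by the elementary inequalities $1-e^{-x}<x$ and $e^x-1>x$ for $x>0$, so $G(u)>0$ and hence $Q(u)\ge 0$.

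For part (b) the plan is to reduce to (a) by a matching argument. For $i=1,\ldots,p-1$, write $\alpha_i=\gamma_i+\delta_i$ with $\gamma_i:=\beta_iB_i/A_i$ (so $\gamma_iA_i=\beta_iB_i$) and $\delta_i:=\alpha_i-\gamma_i\ge 0$. Applying the single-pair specialisation of (a) to each matched pair $(\gamma_i,\beta_i)$---whose verification uses only $a_i/A_i\le(b_i-1)/B_i$---gives
$$\gamma_i\,\frac{e^{-a_iu/A_i}}{1-e^{-u/A_i}}\ge\beta_i\,\frac{e^{-b_iu/B_i}}{1-e^{-u/B_i}},\qquad i=1,\ldots,p-1.$$
Subtracting these from $Q(u)$ reduces the problem to
$$\sum_{i=1}^{p-1}\delta_i\frac{e^{-a_iu/A_i}}{1-e^{-u/A_i}}+\alpha_p\frac{e^{-a_pu/A_p}}{1-e^{-u/A_p}}-\beta_p\frac{e^{-b_pu/B_p}}{1-e^{-u/B_p}}\ge 0,$$
which is a $V$-type expression with a single denominator term satisfying condition (a): the balance $\sum_{i=1}^{p-1}\delta_iA_i+\alpha_pA_p=\beta_pB_p$ drops out of the global balance, while the extremum condition $\max_{i\le p}a_i/A_i\le(b_p-1)/B_p$ follows from $a_p/A_p\le(b_p-1)/B_p$ (direct hypothesis) together with the chain $a_i/A_i\le(b_i-1)/B_i<b_i/B_i\le(b_p-1)/B_p$ for $i<p$. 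Part (a) then closes the argument.

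The main substantive step is the algebraic rearrangement in (a) that uses the balance condition to split $G(u)$ into two manifestly positive sums; without this cancellation the naive termwise bound is insufficient, because the individual summands $1/(1-e^{-u/A_i})$ and $1/(e^{u/B_j}-1)$ each have a $1/u$ singularity at the origin that must be absorbed against the other side. Everything else amounts to routine verification of elementary inequalities and of the internal consistency of the reduced hypotheses in (b).
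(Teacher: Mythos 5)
Your proof is correct. The paper itself omits the argument for this theorem, deferring to the proof of Theorem~5 in \cite{KPCMFT2016}, so there is no in-text proof to match line by line; but your mechanism for part (a) is essentially the standard one, equivalent to the mean-value-theorem bound the paper does spell out in the proof of Theorem~\ref{th:sufficient}: there one gets $uQ(u)\ge\sum_i\alpha_iA_ie^{-ua_i/A_i}-\sum_j\beta_jB_je^{-u(b_j-1)/B_j}$ and then invokes the max/min hypothesis plus the balance condition, whereas you factor out $e^{-Mu}$ first and absorb the $1/u$ singularities against each other via $1-e^{-x}<x<e^x-1$; the two routes are interchangeable, and your remark that the balance condition is what kills the $1/u$ blow-up at the origin is exactly the right diagnosis. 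For part (b), your decomposition $\alpha_i=\gamma_i+\delta_i$ with $\gamma_iA_i=\beta_iB_i$ is a clean reorganization of the pairing/telescoping argument underlying the original Theorem~5(b): each pair $(\gamma_i,\beta_i)$ is handled by the one-term case of (a) using $a_i/A_i\le(b_i-1)/B_i$, and the leftover mass $\sum_{i<p}\delta_iA_i+\alpha_pA_p=\beta_pB_p$ together with the chain $a_i/A_i\le(b_i-1)/B_i<b_i/B_i\le(b_p-1)/B_p$ puts the residual expression back into case (a). The only cosmetic point is that some $\delta_i$ may vanish, so you are implicitly using (a) with nonnegative rather than strictly positive weights, which your own proof of (a) clearly permits.
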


Below we present two  new classes of completely monotonic gamma ratios not considered in \cite{KPCMFT2016}.
We will need a slight modification of a particular case of the easy direction in Sherman's theorem \cite[Theorem~1]{Borcea}, \cite[Theorem~4.7.3]{NP} given in the proposition below. Just like in the classical situation our proof is by application of  Jensen's inequality. See, for instance, \cite[Theorem~1]{Borcea}, \cite[Theorem~4.7.3]{NP}, \cite[Remark~1.4]{BradPecaric} for details and related results.

\begin{thm}\label{th:Sherman}
Suppose the real vectors $\mathbf{x}=(x_1,\ldots,x_{p})\in[\alpha,\beta]^{p}$, $\mathbf{y}=(y_1,\ldots,y_{s})\in[\alpha,\beta]^{s}$ and nonnegative vectors
$\mathbf{c}=(c_1,\ldots,c_{p})\in[0,\infty)^{p}$, $\mathbf{d}=(d_1,\ldots,d_{s})\in[0,\infty)^{s}$
satisfy the inequalities
\begin{equation}\label{eq:sherman-condition}
y_j\ge\sum\limits_{i=1}^{p}x_{i}h_{ji},~~~j=1,\ldots,s~~\text{and}~~c_{i}\ge\sum\limits_{j=1}^{s}d_{j}h_{ji},~~~i=1,\ldots,p
\end{equation}
for some nonnegative $s\times{p}$ row stochastic matrix $(h_{ji})$, i.e. such that $h_{ji}\ge0$ and $\sum_{i=1}^{p}h_{ji}=1$ for $j=1,\ldots,s$. Then for any convex decreasing function $f:[\alpha,\beta]\to[0,\infty)$ we have
\begin{equation}\label{eq:sherman}
\sum\limits_{j=1}^{s}d_{j}f(y_j)\le\sum\limits_{i=1}^{p}c_{i}f(x_{i}).
\end{equation}
\end{thm}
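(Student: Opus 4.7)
The plan is to carry out a direct three-step argument combining monotonicity, convexity (via Jensen), and the stochastic matrix hypothesis. Since nothing in the statement is really asymptotic or technical, I would present it as a short chain of inequalities rather than invoking the classical Sherman theorem as a black box.

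First, I would observe that for each fixed $j$ the vector $(h_{j1},\ldots,h_{jp})$ is a probability vector, so the convex combination $z_j:=\sum_{i=1}^{p}x_ih_{ji}$ lies in $[\alpha,\beta]$ (because this interval is convex and contains every $x_i$). Hence $f(z_j)$ is well-defined, and since $f$ is decreasing on $[\alpha,\beta]$, the hypothesis $y_j\ge z_j$ yields
\[
f(y_j)\le f(z_j)=f\!\left(\sum_{i=1}^{p}x_ih_{ji}\right).
\]
Next, convexity of $f$ together with Jensen's inequality applied to the probability vector $(h_{ji})_i$ gives
\[
f\!\left(\sum_{i=1}^{p}x_ih_{ji}\right)\le \sum_{i=1}^{p}h_{ji}f(x_i).
\]
Combining these two bounds produces $f(y_j)\le \sum_{i=1}^{p}h_{ji}f(x_i)$ for every $j$.

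The final step is to take the $\mathbf{d}$-weighted sum. Multiplying by $d_j\ge0$, summing over $j$, and swapping the order of summation yields
\[
\sum_{j=1}^{s}d_jf(y_j)\le\sum_{j=1}^{s}\sum_{i=1}^{p}d_jh_{ji}f(x_i)=\sum_{i=1}^{p}f(x_i)\sum_{j=1}^{s}d_jh_{ji}.
\]
Because $f$ takes values in $[0,\infty)$, each $f(x_i)$ is nonnegative, so the hypothesis $c_i\ge\sum_{j=1}^{s}d_jh_{ji}$ can be used term-by-term:
\[
\sum_{i=1}^{p}f(x_i)\sum_{j=1}^{s}d_jh_{ji}\le\sum_{i=1}^{p}c_if(x_i),
\]
which is exactly the claimed inequality \eqref{eq:sherman}.

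There is no real obstacle in this argument; the only thing to be careful about is the order in which the two sets of assumptions are used. The monotonicity of $f$ is needed to discharge the first family of inequalities \eqref{eq:sherman-condition} (those on the $y_j$'s), while nonnegativity of $f$ is needed to discharge the second family (those on the $c_i$'s); convexity is used exclusively in the middle step via Jensen. If one tried to reverse the roles, say by applying the $c_i$-inequality before using convexity, one would need extra sign assumptions on the $f(x_i)$'s, so the ordering above is essentially forced.
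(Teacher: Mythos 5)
Your proposal is correct and follows essentially the same chain of inequalities as the paper's own proof: first use that $f$ is decreasing together with $y_j\ge\sum_i x_ih_{ji}$, then Jensen's inequality via the row-stochasticity of $(h_{ji})$, and finally the bound $c_i\ge\sum_j d_jh_{ji}$ combined with nonnegativity of $f$. The only (welcome) addition is your explicit remark that each convex combination $\sum_i x_ih_{ji}$ stays in $[\alpha,\beta]$, which the paper leaves implicit.
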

\begin{proof}
Indeed,
\begin{multline*}
\sum\limits_{j=1}^{s}d_{j}f(y_j)\le\sum\limits_{j=1}^{s}d_{j}f\left(\sum\limits_{i=1}^{p}x_{i}h_{ji}\right)
\le\sum\limits_{j=1}^{s}d_{j}\sum\limits_{i=1}^{p}h_{ji}f\left(x_{i}\right)
\\
=\sum\limits_{i=1}^{p}f\left(x_{i}\right)\sum\limits_{j=1}^{s}d_{j}h_{ji}
\le\sum\limits_{i=1}^{p}c_{i}f\left(x_{i}\right).
\end{multline*}
The first inequality is due to decrease of $f$ in view of the first condition in \eqref{eq:sherman-condition}; the second inequality is Jensen's inequality \cite[Lemma~1.1.11]{NP} valid since $f$ is convex and $(h_{ij})$ is row stochastic; finally, the ultimate inequality is true due to the second condition in \eqref{eq:sherman-condition} and nonnegativity of $f$.
\end{proof}

By restricting generality we can get rid of the matrix $(h_{ji})$ in the above theorem and formulate
the hypothesis directly in terms of parameters.

\begin{cor}\label{cr:sherman-simplified}
Let $\mathbf{x}$, $\mathbf{y}$, $\mathbf{c}$, $\mathbf{d}$ retain their meaning from Theorem~\emph{\ref{th:Sherman}}.
Suppose further that
\begin{equation}\label{eq:sherman-simplified}
\sum\limits_{i=1}^{p}c_{i}=\sum\limits_{j=1}^{s}d_{j}=D~~\text{and}~~ y_j\ge\frac{1}{D}\sum\limits_{i=1}^{p}c_{i}x_{i},~~~j=1,\ldots,s.
\end{equation}
Then inequality \eqref{eq:sherman} holds for any convex decreasing function $f:[\alpha,\beta]\to[0,\infty)$.
\end{cor}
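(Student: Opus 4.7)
The plan is to reduce the corollary directly to Theorem~\ref{th:Sherman} by producing an explicit row-stochastic matrix $(h_{ji})$ that makes both halves of \eqref{eq:sherman-condition} hold. The second hypothesis in \eqref{eq:sherman-simplified} can be read as ``$y_j$ dominates the $\mathbf{c}$-weighted average of the $x_i$ (normalized by $D$)'', which suggests taking all rows of $(h_{ji})$ to be identical and proportional to $\mathbf{c}$. Concretely, I would define
\[
h_{ji}:=\frac{c_i}{D},\qquad j=1,\ldots,s,\ i=1,\ldots,p.
\]

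Next I would verify the three requirements of Theorem~\ref{th:Sherman} in turn. Row stochasticity is immediate from the first identity in \eqref{eq:sherman-simplified}: $\sum_{i=1}^{p}h_{ji}=D^{-1}\sum_{i=1}^{p}c_{i}=1$. The first inequality in \eqref{eq:sherman-condition} reads $\sum_{i=1}^{p}x_{i}h_{ji}=D^{-1}\sum_{i=1}^{p}c_{i}x_{i}$, which is bounded above by $y_j$ precisely by the second hypothesis in \eqref{eq:sherman-simplified}. The second inequality in \eqref{eq:sherman-condition} becomes $\sum_{j=1}^{s}d_{j}h_{ji}=(c_{i}/D)\sum_{j=1}^{s}d_{j}=c_{i}$, so it holds with equality. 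An application of Theorem~\ref{th:Sherman} then yields \eqref{eq:sherman} for any convex decreasing nonnegative $f$.

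There is essentially no obstacle here; the entire content of the argument is choosing the right $(h_{ji})$. The shape of \eqref{eq:sherman-simplified} (a single averaged lower bound on every $y_j$, together with equal total masses) is exactly what forces the matrix to be rank-one with rows equal to $\mathbf{c}/D$, so once this observation is made the verification is mechanical.
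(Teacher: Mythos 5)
Your proposal is correct and coincides with the paper's own proof: both choose the rank-one row-stochastic matrix $h_{ji}=c_i/D$ and then invoke Theorem~\ref{th:Sherman}. Your verification of the two conditions in \eqref{eq:sherman-condition} is just a more explicit spelling-out of what the paper leaves to the reader.
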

\begin{proof}
Indeed, choosing $h_{ji}=c_i/D$ for all $i=1,\ldots,p$ and $j=1,\ldots,s$ we get a row stochastic matrix.
It remains to apply  Theorem~\ref{th:Sherman}.
\end{proof}

The following theorem furnishes a large number of examples of logarithmically completely monotonic gamma ratios of the form \eqref{eq:V-defined}.

\begin{thm}\label{th:sufficient}
Suppose $(h_{ji})$ is an arbitrary row stochastic matrix. Assume further that the positive numbers $(\alpha_1,\ldots\alpha_p)$, $(\beta_{1},\ldots,\beta_{s})$, $(A_1,\ldots,A_{p})$, $(B_1,\ldots,B_{s})$ and nonnegative numbers $(a_1,\ldots,a_p)$, $(b_1,\ldots,b_s)$ satisfy the conditions
\begin{equation}\label{eq:sufficientSherman}
\begin{split}
&\alpha_{i}A_{i}\ge\sum\limits_{j=1}^{s}\beta_{j}B_jh_{ji}~\text{for}~i=1,\ldots,p,
\\
&b_{j}\ge B_{j}\sum\limits_{i=1}^{p}\frac{a_{i}}{A_i}h_{ji}+1~\text{for}~j=1,\ldots,s.
\end{split}
\end{equation}
Then $Q(u)\ge0$ for all $u>0$, where $Q(u)$ is defined in \eqref{eq:Q(u)}.
\end{thm}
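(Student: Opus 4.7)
The plan is to apply Sherman's Theorem~\ref{th:Sherman} with the convex decreasing function $f(v)=e^{-uv}$ (for fixed $u>0$) and the identifications $c_i=\alpha_iA_i$, $d_j=\beta_jB_j$, $x_i=a_i/A_i$, $y_j=(b_j-1)/B_j$. With these choices, the first Sherman hypothesis $c_i\ge\sum_j d_jh_{ji}$ becomes precisely the first inequality in \eqref{eq:sufficientSherman}, while the second Sherman hypothesis $y_j\ge\sum_ix_ih_{ji}$ amounts to $(b_j-1)/B_j\ge\sum_ih_{ji}\,a_i/A_i$ which, upon multiplication by $B_j$, is the second inequality in \eqref{eq:sufficientSherman} (here $b_j\ge1$ follows from that same inequality, so all $x_i,y_j$ lie in $[0,\infty)$). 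Sherman's theorem therefore yields
\[
\sum_{j=1}^{s}\beta_jB_j\,e^{-u(b_j-1)/B_j}\;\le\;\sum_{i=1}^{p}\alpha_iA_i\,e^{-ua_i/A_i}.
\]

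To convert this bound into $Q(u)\ge0$, I plan to sandwich it between two elementary comparisons based on the standard pair of inequalities $1-e^{-x}\le x\le e^x-1$, valid for $x\ge0$. Choosing $x=u/A_i$ gives $A_i(1-e^{-u/A_i})\le u$, which rearranges and multiplies by $\alpha_ie^{-ua_i/A_i}$ to produce $\alpha_iA_ie^{-ua_i/A_i}/u\le\alpha_ie^{-ua_i/A_i}/(1-e^{-u/A_i})$; summing over $i$ bounds the right-hand side of the Sherman conclusion (after division by $u$) from above by the positive part of $Q(u)$. Choosing $x=u/B_j$ gives $B_j(e^{u/B_j}-1)\ge u$, which analogously yields $\beta_je^{-ub_j/B_j}/(1-e^{-u/B_j})\le\beta_jB_je^{-u(b_j-1)/B_j}/u$; summing over $j$ bounds the negative part of $Q(u)$ from above by the left-hand side of the Sherman conclusion (after division by $u$). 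Chaining these three inequalities delivers $Q(u)\ge0$ at once.

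The main subtlety lies in choosing $y_j=(b_j-1)/B_j$ rather than $b_j/B_j$: the shift by $1/B_j$ is exactly what is needed for the ``$+1$'' correction in the second inequality of \eqref{eq:sufficientSherman} to match Sherman's majorization hypothesis, and simultaneously for the two elementary sandwich inequalities $1-e^{-x}\le x$ and $e^x-1\ge x$ to conspire so that the bounds on the two sides of $Q(u)$ are compatible. Noticing this alignment is the one non-routine step; the rest is elementary algebra.
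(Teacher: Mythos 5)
Your proposal is correct and follows essentially the same route as the paper: both apply Theorem~\ref{th:Sherman} with $c_i=\alpha_iA_i$, $d_j=\beta_jB_j$, $x_i=a_i/A_i$, $y_j=(b_j-1)/B_j$ and $f(v)=e^{-uv}$, and then sandwich $uQ(u)$ between the two exponential sums. The only cosmetic difference is that the paper obtains the sandwich via the mean value theorem applied to $e^{a_iu/A_i}-e^{(a_i-1)u/A_i}$, whereas you use the equivalent elementary bounds $1-e^{-x}\le x\le e^x-1$.
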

\begin{proof}
By the mean value theorem we have
$$
\sum_{i=1}^{p}\frac{\alpha_{i}e^{-a_iu/A_i}}{1-e^{-u/A_i}}=\sum_{i=1}^{p}\frac{\alpha_{i}}{e^{a_iu/A_i}-e^{(a_i-1)u/A_i}}
=\sum_{i=1}^{p}\frac{\alpha_{i}}{(u/A_i)e^{u\xi_i}}=\frac{1}{u}\sum_{i=1}^p\alpha_{i}A_ie^{-u\xi_i}
$$
where $\xi_{i}\in((a_i-1)/A_i, a_i/A_i)$ and similarly
$$
\sum_{j=1}^{s}\beta_{j}\frac{e^{-b_{j}u/B_{j}}}{1-e^{-u/B_j}}=\frac{1}{u}\sum_{j=1}^{s}\beta_{j}B_{j}e^{-u\eta_j},
$$	
where $\eta_{j}\in((b_{j}-1)/B_{j}, b_{j}/B_{j})$. Hence, for $Q(u)$ we get
\begin{equation}\label{eq:Qmeanvalue}
uQ(u)=\sum_{i=1}^{p}\alpha_{i}A_{i}e^{-u\xi_i}-\sum_{j=1}^{s}\beta_{j}B_{j}e^{-u\eta_j}
\ge\sum_{i=1}^{p}\alpha_{i}A_{i}e^{-ua_{i}/A_{i}}-\sum_{j=1}^{s}\beta_{j}B_{j}e^{-u(b_{j}-1)/B_{j}}.
\end{equation}	
We are now in the position to apply Theorem~\ref{th:Sherman} with $d_{j}=\beta_{j}B_{j}$, $y_{j}=(b_{j}-1)/B_{j}$, $j=1,\ldots,s$, $c_{i}=\alpha_{i}A_{i}$,  $x_{i}=a_{i}/A_{i}$, $i=1,\ldots,p$, and $f(x)=e^{-ux}$. For any fixed $u>0$ this function is decreasing and convex.
\end{proof}

\begin{cor}\label{cr:Vlcmnew}
Suppose $V(x)$ is defined in \eqref{eq:V-defined}. If conditions \eqref{eq:sufficientSherman} are satisfied,
then $(\log{V})''$ is completely monotonic. If, moreover, conditions \eqref{eq:balance-enthropy} are satisfied,
then $V(x)$ is l.c.m.
\end{cor}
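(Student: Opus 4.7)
The plan is essentially to chain together three results that are already available in the excerpt; no new technical input is required, and the corollary functions as a convenient packaging of Theorem~\ref{th:sufficient} in the language of logarithmic complete monotonicity.

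First I would apply Theorem~\ref{th:sufficient} to the hypothesis \eqref{eq:sufficientSherman}. Its conclusion is exactly that $Q(u) \geq 0$ for every $u>0$, where $Q$ is the function introduced in \eqref{eq:Q(u)}. Feeding this directly into Lemma~\ref{lm:V2prime-cm} yields the complete monotonicity of $(\log V)''$ through the representation \eqref{eq:logW2prime-int}; this disposes of the first assertion.

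For the second assertion I would invoke Theorem~\ref{th:Vlcm}, which characterizes logarithmic complete monotonicity of $V$ as the simultaneous validity of \eqref{eq:Q(u)} and the balancing/entropy conditions \eqref{eq:balance-enthropy}. The first ingredient is supplied by the previous paragraph, while the second is exactly the additional standing assumption of the corollary, so the conclusion is immediate.

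There is no genuine obstacle here; the only remark worth flagging is a sanity check on compatibility of the two hypotheses. Summing the first line of \eqref{eq:sufficientSherman} over $i$ and exchanging the order of summation while using the row-stochasticity $\sum_{i}h_{ji}=1$ gives $\sum_i \alpha_i A_i \ge \sum_j \beta_j B_j$, so the equality required in \eqref{eq:balance-enthropy} is consistent with, but not forced by, \eqref{eq:sufficientSherman}; adding \eqref{eq:balance-enthropy} on top of \eqref{eq:sufficientSherman} therefore selects the extremal row-stochastic scenario together with the normalization $\rho \le \theta$. Since this is merely a consistency observation and not part of the logical chain, the proof itself reduces to the two citations above.
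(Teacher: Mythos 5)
Your proposal is correct and is exactly the argument the paper intends (the corollary is stated without proof precisely because it is the immediate concatenation of Theorem~\ref{th:sufficient}, Lemma~\ref{lm:V2prime-cm} and Theorem~\ref{th:Vlcm}). Your consistency remark that summing the first line of \eqref{eq:sufficientSherman} over $i$ and using row-stochasticity yields $\sum_i\alpha_iA_i\ge\sum_j\beta_jB_j$ is also accurate, though, as you note, not needed for the logical chain.
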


An application of Corollary~\ref{cr:sherman-simplified} leads immediately to
\begin{cor}\label{cr:Vlcm-simplified}
Suppose conditions \eqref{eq:balance-enthropy} are satisfied and
$$
(b_{j}-1)\sum\limits_{k=1}^{s}\beta_{k}B_{k}\ge B_{j}\sum\limits_{i=1}^{p}\alpha_{i}a_{i}~\text{for}~j=1,\ldots,s.
$$
Then $V(x)$ defined in \eqref{eq:V-defined} is l.c.m.
\end{cor}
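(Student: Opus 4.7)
The plan is to reduce the statement to Corollary~\ref{cr:Vlcmnew} by exhibiting a single row stochastic matrix $(h_{ji})$ for which the hypothesis \eqref{eq:sufficientSherman} of Theorem~\ref{th:sufficient} becomes exactly the hypothesis of the present corollary. Since \eqref{eq:balance-enthropy} is already assumed, once \eqref{eq:sufficientSherman} is established Corollary~\ref{cr:Vlcmnew} immediately gives logarithmic complete monotonicity of $V$. No auxiliary integral estimates or analytic manipulations are needed beyond verifying two simple algebraic identities.

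Concretely, set $D:=\sum_{i=1}^p\alpha_iA_i=\sum_{j=1}^s\beta_jB_j>0$, where the equality is the first half of \eqref{eq:balance-enthropy}, and choose the $s\times p$ matrix with constant rows
$$
h_{ji}:=\frac{\alpha_iA_i}{D},\qquad i=1,\ldots,p,\ j=1,\ldots,s.
$$
Its entries are nonnegative and $\sum_{i=1}^p h_{ji}=D/D=1$ for every $j$, so $(h_{ji})$ is row stochastic. With this choice the first line of \eqref{eq:sufficientSherman} reads
$$
\alpha_iA_i\ \ge\ \sum_{j=1}^{s}\beta_jB_j\frac{\alpha_iA_i}{D}\ =\ \alpha_iA_i,
$$
which holds with equality. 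The second line reads
$$
b_j\ \ge\ \frac{B_j}{D}\sum_{i=1}^{p}\alpha_ia_i+1,\qquad j=1,\ldots,s,
$$
and, upon multiplying through by $D>0$ and rearranging, is precisely the standing hypothesis $(b_j-1)\sum_k\beta_kB_k\ge B_j\sum_i\alpha_ia_i$. Thus \eqref{eq:sufficientSherman} is verified, and Corollary~\ref{cr:Vlcmnew} together with the assumed \eqref{eq:balance-enthropy} shows that $V$ is l.c.m.

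There is no serious obstacle: the content lies in the observation that averaging with weights $c_i/D$ is exactly what Corollary~\ref{cr:sherman-simplified} encodes, and that the balance condition from \eqref{eq:balance-enthropy} makes the weights $c_i=\alpha_iA_i$ and $d_j=\beta_jB_j$ have a common total mass $D$. A reader preferring to bypass Theorem~\ref{th:sufficient} can instead plug $c_i=\alpha_iA_i$, $d_j=\beta_jB_j$, $x_i=a_i/A_i$, $y_j=(b_j-1)/B_j$, and $f(x)=e^{-ux}$ directly into Corollary~\ref{cr:sherman-simplified} and then into the mean-value identity \eqref{eq:Qmeanvalue}; this is the same argument in different dress.
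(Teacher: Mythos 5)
Your proposal is correct and follows the same route the paper intends: the paper derives this corollary by "an application of Corollary~\ref{cr:sherman-simplified}," which is precisely the constant-row choice $h_{ji}=\alpha_iA_i/D$ that you make explicit, and your algebraic verification that \eqref{eq:sufficientSherman} then reduces to the stated hypotheses is accurate. Nothing is missing.
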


Next, we present another class of logarithmically completely monotonic functions of the form \eqref{eq:V-defined}.
Assume $p=s$ and $a_j=b_j=a$, $A_j=1/\alpha_j$, $B_j=1/\beta_j$ for all $j=1,\ldots,p$. Then
$$
\widehat{V}(x)=\prod_{j=1}^{p}\frac{\Gamma^{\alpha_{j}}(x/\alpha_{j}+a)\alpha_{j}^x}{\Gamma^{\beta_{j}}(x/\beta_{j}+a)\beta_{j}^x}
=\widehat{W}(x)\prod_{j=1}^{p}(\alpha_{j}/\beta_{j})^x,
$$
so that
\begin{multline*}
(\log\widehat{V})'=\sum\limits_{j=1}^{p}[\psi(x/\alpha_{j}+a)-\psi(x/\beta_{j}+a)+\log(\alpha_{j})-\log(\beta_{j})]
\\
=(\log\widehat{W})'+\sum\limits_{j=1}^{p}[\log(\alpha_{j})-\log(\beta_{j})]
\end{multline*}
and
$$
(\log\widehat{V})''=(\log\widehat{W})''.
$$
We have the following proposition.
\begin{thm}\label{th:Vhatlcm}
Suppose $a\ge1$ and the conditions
\begin{equation}\label{eq:A-1B-1maj}
\begin{split}
&0<\alpha_{1}\leq\alpha_{2}\leq\ldots\leq\alpha_{p},~~~0<\beta_{1}\leq\beta_{2}\leq\ldots\leq\beta_{p},
\\[7pt]
&\sum\nolimits_{j=1}^{k}\alpha_{j}\leq\sum\nolimits_{j=1}^{k}\beta_{j}~~~~~\text{for}~k=1,\ldots,p,
\end{split}
\end{equation}
are satisfied.  Then $(\log\widehat{W})'$ is a Bernstein function and $\widehat{V}$ is logarithmically completely monotonic.
\end{thm}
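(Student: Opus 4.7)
The plan is to reduce the theorem, via Lemma~\ref{lm:V2prime-cm} and Theorem~\ref{th:Vlcm}, to a single inequality $Q(u)\ge0$ for all $u>0$, and then to deduce it via convexity of an auxiliary function combined with the weak-majorization hypothesis \eqref{eq:A-1B-1maj}. Substituting $A_j=1/\alpha_j$, $B_j=1/\beta_j$, $a_j=b_j=a$ in \eqref{eq:Q(u)} and multiplying through by $u$,
$$
uQ(u)=\sum_{j=1}^{p}\bigl[\phi(\alpha_ju)-\phi(\beta_ju)\bigr],\qquad \phi(t):=\frac{te^{-at}}{1-e^{-t}},
$$
so the whole problem hinges on establishing $\sum_j\phi(\alpha_ju)\ge\sum_j\phi(\beta_ju)$ for every $u>0$.

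First I would verify that $\phi$ is positive, strictly decreasing, and convex on $(0,\infty)$, with $\phi(\infty)=\phi'(\infty)=0$. Factor $\phi=gh$ with $g(t)=t/(e^t-1)$ and $h(t)=e^{-(a-1)t}$. A direct computation gives $g''(t)=e^t\bigl[e^t(t-2)+(t+2)\bigr]/(e^t-1)^3$, and the bracketed auxiliary function $k(t):=e^t(t-2)+(t+2)$ satisfies $k(0)=k'(0)=0$ and $k''(t)=te^t>0$, so $k>0$ on $(0,\infty)$ and hence $g''>0$; similarly $g'<0$. Since $h$ is positive, decreasing, convex (constant when $a=1$), the identity $\phi''=g''h+2g'h'+gh''$ expresses $\phi''$ as a sum of three nonnegative terms (using $g',h'\le0$).

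Having convexity of $\phi$ I would then use the integral representation
$$
\phi(t)=\int_0^\infty \phi''(r)(r-t)^+\,dr,\qquad t\ge0,
$$
obtained by integrating $\phi'(s)=-\int_s^\infty\phi''(r)\,dr$ twice, so that
$$
\sum_{j=1}^{p}\bigl[\phi(\alpha_ju)-\phi(\beta_ju)\bigr]=\int_0^\infty\phi''(r)\Bigl[S_\alpha(r,u)-S_\beta(r,u)\Bigr]dr,\quad S_\gamma(r,u):=\sum_{j=1}^{p}(r-\gamma_ju)^+.
$$
Since $\phi''\ge0$ it suffices to show $S_\alpha(r,u)\ge S_\beta(r,u)$ pointwise. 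For fixed $r,u>0$ set $k:=\#\{j:\alpha_ju<r\}$ and $l:=\#\{j:\beta_ju<r\}$; by monotonicity of $(\alpha_j)$ and $(\beta_j)$ these are initial segments, so $S_\alpha(r,u)=kr-u\sum_{j=1}^k\alpha_j$ and $S_\beta(r,u)=lr-u\sum_{j=1}^l\beta_j$. A short case split ($k\ge l$ versus $k<l$) that combines the hypothesis $\sum_{j=1}^m\alpha_j\le\sum_{j=1}^m\beta_j$ with the boundary estimates $\alpha_ju\le r$ for $j\le k$ and $\alpha_ju\ge r$ for $j>k$ yields $S_\alpha\ge S_\beta$ in either case.

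From $Q(u)\ge0$ and Lemma~\ref{lm:V2prime-cm}, $(\log\widehat W)''=(\log\widehat V)''$ is completely monotonic. The identity $(\log\widehat W)'(0^+)=\sum_j[\psi(a)-\psi(a)]=0$ together with $(\log\widehat W)''\ge0$ (monotonicity) forces $(\log\widehat W)'\ge0$ on $(0,\infty)$, which is exactly the Bernstein property. Logarithmic complete monotonicity of $\widehat V$ then follows from Theorem~\ref{th:Vlcm}: the balance $\sum_j\alpha_jA_j=p=\sum_j\beta_jB_j$ is immediate, and $\rho=\prod_jA_j^{\alpha_jA_j}\prod_jB_j^{-\beta_jB_j}=\prod_j\beta_j/\prod_j\alpha_j$ coincides with the prefactor $\theta=\prod_j\beta_j/\prod_j\alpha_j$, so $\rho\le\theta$ holds (with equality). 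The main obstacle is the convexity analysis: once $\phi''\ge0$ is in hand via the reduction to the auxiliary function $k$, the majorization-type estimate of the third paragraph is essentially routine (one could equivalently invoke Sherman's Theorem~\ref{th:Sherman} after exhibiting an appropriate row stochastic matrix, but the integral-representation route is more direct).
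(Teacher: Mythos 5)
Your proof is correct and follows the same overall route as the paper's: reduce to $\widehat{Q}(u)\ge 0$ via Lemma~\ref{lm:V2prime-cm}, rewrite $u\widehat{Q}(u)$ as $\sum_j[\phi(\alpha_j u)-\phi(\beta_j u)]$ with $\phi(t)=te^{-at}/(1-e^{-t})$, prove that $\phi$ is decreasing and convex for $a\ge1$, and conclude by weak majorization. The differences lie only in how the two key ingredients are justified. For convexity, the paper factors out $e^{-(a-1)t}$ via a midpoint-convexity argument and then cites the convexity of the Bernoulli generating function $t/(e^t-1)$ as well known, whereas you verify $g''>0$ explicitly through the auxiliary function $k(t)=e^t(t-2)+t+2$ and then use the product rule $\phi''=g''h+2g'h'+gh''$ (the middle term nonnegative because $g'\le0$ and $h'\le0$); your version is more self-contained, and the computation checks out. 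For the majorization step, the paper cites \cite[Proposition~4.B.2]{MOA}, whereas you reprove exactly that special case via the representation $\phi(t)=\int_0^\infty\phi''(r)(r-t)^+\,dr$ and a case split on the counts $k$ and $l$; this is a correct, self-contained proof of the Tomi\'{c}--Weyl inequality for decreasing convex functions, and the sketched case split does close (in the case $k\ge l$ one uses $\alpha_j u\le r$ for $l<j\le k$, in the case $k<l$ one uses $\alpha_j u\ge r$ for $k<j\le l$). Finally, for the l.c.m.\ conclusion the paper computes $\lim_{x\to\infty}(-\log\widehat{V})'=0$ from the asymptotics of $\psi$, while you observe that $\rho=\theta=\prod_j\beta_j/\prod_j\alpha_j$ and invoke Theorem~\ref{th:Vlcm}; both are valid, and your route is slightly cleaner since Theorem~\ref{th:Vlcm} already encapsulates that limit computation.
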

\begin{proof} As $(\log\widehat{W})'(0)=0$ this function is Bernstein if and only if $(\log\widehat{W})''$ is completely monotonic. According to Lemma~\ref{lm:V2prime-cm} this will be the case if and only if
$$
\widehat{Q}(u):=\sum_{j=1}^p\left\{\frac{\alpha_{j}e^{-au\alpha_j}}{(1-e^{-u\alpha_j})}
-\frac{\beta_{j}e^{-au\beta_j}}{(1-e^{-u\beta_j})}\right\}\ge0
$$
on $[0,\infty)$. To prove this inequality we need the following lemma
\begin{lem}\label{lm:phi-convex}
Suppose $\delta\ge\gamma>0$.  Then  function
$$
\phi_{\delta,\gamma}(t)=\frac{te^{-\delta{t}}}{1-e^{-\gamma{t}}}
$$
is decreasing and convex on $[0,\infty)$.
\end{lem}
\begin{proof}
Note first that if $f(t)$ is positive, convex and decreasing, then so is $e^{-\lambda{t}}f(t)$ for  $\lambda>0$.
Indeed, decrease is obvious, and it remains  to show that
\begin{align*}
& e^{-\lambda{t_1}}f(t_1)+e^{-\lambda{t_2}}f(t_2)\ge 2e^{-\lambda(t_1+t_2)/2}f((t_1+t_2)/2)
\\
\Leftrightarrow~& e^{\lambda(t_2-t_1)/2}f(t_1)+e^{\lambda(t_1-t_2)/2}f(t_2)\ge 2f((t_1+t_2)/2)
\end{align*}
for $t_1<t_2$.  Writing $q=e^{\lambda(t_2-t_1)/2}>1$, in view of $f(t_2)<f(t_1)$ and convexity of $f$, we will have
\begin{multline*}
qf(t_1)+q^{-1}f(t_2)=f(t_1)+f(t_2)+(q-1)(f(t_1)-q^{-1}f(t_2))
\\
>f(t_1)+f(t_2)+(q-1)(f(t_1)-f(t_2))>f(t_1)+f(t_2)\ge 2f((t_1+t_2)/2).
\end{multline*}
As $\phi_{\delta,\gamma}(t)=e^{-(\delta-\gamma)t}\phi_{\gamma,\gamma}(t)$
it is then sufficient to establish the lemma for $\phi_{\gamma,\gamma}(t)$.  In turn,
$\phi_{\gamma,\gamma}(t)=(1/\gamma)\phi_{1,1}(\gamma{t})$, so that it suffices to establish the claim for
$\phi_{1,1}(u)$.  The function $\phi_{1,1}(u)$ is the exponential generating function of Bernoulli numbers and is well-known to be decreasing and convex on the whole real line.
\end{proof}
We now return to the proof of Theorem~\ref{th:Vhatlcm}. Inequality $\widehat{Q}(u)\ge0$ in terms of $\phi_{\alpha,\beta}(t)$ takes the form
$$
\sum_{j=1}^p\phi_{au,u}(\beta_j)\le\sum_{j=1}^p\phi_{au,u}(\alpha_j).
$$
According to \cite[Proposition~4.B.2]{MOA} the above lemma implies that this inequality
is true if $a\ge1$ and the conditions \eqref{eq:A-1B-1maj} are satisfied.

Finally, to prove that $\widehat{V}$ is logarithmically completely monotonic it remains to show that
$$
\lim\limits_{x\to\infty}(-\log\widehat{V})'=
\lim\limits_{x\to\infty}(-\log\widehat{W})'+\sum\limits_{j=1}^{p}[\log(\beta_{j})-\log(\alpha_{j})]\ge0.
$$
Using the asymptotic formula
$$
\psi(x)\sim \log(x)-\frac{1}{2x}+O(x^{-2})~~\text{as}~~x\to\infty,
$$
we can write
$$
\psi(Cx+c)=\log(x)+\log(C)+\bigg(c-\frac{1}{2}\bigg)\frac{1}{Cx}+O(x^{-2})\quad \text{as}\quad x\to\infty.
$$
Then, as $x\to\infty$
$$
(-\log\widehat{W}(x))'=\sum_{j=1}^p[\psi(x/\beta_{j}+a)-\psi(x/\alpha_{j}+a)]
=\sum_{j=1}^p[\log(\alpha_{j}-\log(\beta_{j})]+O(1/x)
$$
yielding
$$
\lim\limits_{x\to\infty}(-\log\widehat{V})'=0.
$$
\end{proof}

\textbf{Remark.}  The properties of the function $\phi_{\alpha,\beta}(u)$ were studied in great detail in \cite{QiGuoRGMIA}.
It appears, nevertheless, that the result of Lemma~\ref{lm:phi-convex} does not follow from investigations in \cite{QiGuoRGMIA}.

Alzer and Berg \cite{AlzBerg} (for $\delta=0$) and soon thereafter Leblanc and Johnson \cite{LeblancJohnson} (for $\delta\ge0$) studied the complete monotonicity properties of the function $\sum_{k=1}^{m}a_k\psi(b_kx+\delta)$ and its derivative.  In particular, an application of \cite[Lemma~2.1]{LeblancJohnson} to the function $(\log\widehat{V})'$ leads to the conclusion that $(\log\widehat{V})''$ is completely monotonic if $a\ge1/2$ and
$$
\max_{1\le{j}\le{p}}(\alpha_{j})\le\min_{1\le{j}\le{p}}(\beta_{j}).
$$
Our majorization conditions \eqref{eq:A-1B-1maj} are certainly much less restrictive than the above condition, but at the price of a slightly stronger assumption $a\ge1$ on the parameter $a$.  Application of \cite[Lemma~2.1]{LeblancJohnson} in its full generality leads to the following proposition.
\begin{thm}\label{th:LeblancJohnson}
Suppose conditions \eqref{eq:balance-enthropy} hold, $a\ge1/2$ and, moreover,
\\[5pt]
\emph{(a)} $A_1\ge{A_2}\ge\cdots\ge{A_p}\ge{B_1}\ge{B_2}\ge\cdots\ge{B_s}>0$,
\\[7pt]
\emph{(b)} $\alpha_{1}A_1\ge\alpha_{2}A_2\ge\cdots\ge\alpha_{p}A_p>0$,
\\[7pt]
\emph{(c)} $0<\beta_{1}B_1\le\beta_{2}B_2\le\cdots\le\beta_{s}B_s$.
\\[5pt]
Then, the function
$$
V(x)=\theta^{-x}\frac{\prod_{i=1}^p\Gamma^{\alpha_{i}}(A_{i}x+a)}{\prod_{j=1}^s\Gamma^{\beta_{j}}(B_{j}x+a)}
$$
is logarithmically completely monotonic.
\end{thm}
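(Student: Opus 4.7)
The plan is to reduce the assertion, via Theorem~\ref{th:Vlcm} together with Lemma~\ref{lm:V2prime-cm}, to complete monotonicity of $(\log V)''$, and then to deduce that monotonicity from \cite[Lemma~2.1]{LeblancJohnson}. Since conditions \eqref{eq:balance-enthropy} are explicitly part of the hypothesis, Theorem~\ref{th:Vlcm} tells us that $V$ is logarithmically completely monotonic if and only if $Q(u)\ge 0$ for all $u>0$, which by Lemma~\ref{lm:V2prime-cm} is in turn equivalent to complete monotonicity of $(\log V)''$ on $(0,\infty)$. Because $a_i=b_j=a$ throughout and because the factor $\theta^{-x}$ contributes only an additive constant to $(\log V)'$, we have
$$
(\log V(x))''=\sum_{i=1}^{p}\alpha_i A_i^{2}\,\psi'(A_ix+a)-\sum_{j=1}^{s}\beta_j B_j^{2}\,\psi'(B_jx+a),
$$
which is a linear combination of trigamma functions with a common shift $a$.

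The next step is to apply \cite[Lemma~2.1]{LeblancJohnson} to this expression with $m=p+s$, after concatenating the ``positive'' data $\{(A_i,\alpha_iA_i)\}_{i=1}^{p}$ and the ``negative'' data $\{(B_j,-\beta_jB_j)\}_{j=1}^{s}$ into a single signed-weight sequence. That lemma delivers complete monotonicity of such an alternating sum provided: the common shift satisfies $a\ge 1/2$; the nodes admit an ordering placing all positive-weight nodes above all negative-weight ones (our condition (a)); the magnitudes of the positive weights are non-increasing along that ordering (condition (b)); the magnitudes of the negative weights are non-decreasing along that ordering (condition (c)); and the total signed mass vanishes, i.e.\ $\sum_{i}\alpha_iA_i=\sum_{j}\beta_jB_j$, which is the first part of \eqref{eq:balance-enthropy}. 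Each of these is precisely a standing hypothesis of the theorem, so the lemma applies and yields complete monotonicity of $(\log V)''$.

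The main obstacle I expect is the careful translation of notation between our two-sided setup and the unified indexed form of Leblanc--Johnson: one must verify that with the concatenation above, the specific ``staircase'' sign and monotonicity pattern demanded by their lemma is indeed exhibited, and in particular that the balance condition we impose is the correct normalization assumed there. Once this bookkeeping is in place, complete monotonicity of $(\log V)''$ combines with \eqref{eq:balance-enthropy} to yield l.c.m.\ of $V$ via Theorem~\ref{th:Vlcm}, finishing the proof.
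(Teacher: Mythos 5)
Your proposal is correct and follows essentially the same route as the paper: the authors likewise obtain this theorem by applying \cite[Lemma~2.1]{LeblancJohnson} in full generality to $(\log V)'$, with the reduction to complete monotonicity of $(\log V)''$ handled through Lemma~\ref{lm:V2prime-cm} and Theorem~\ref{th:Vlcm}, and with conditions (a)--(c), the balance condition \eqref{eq:balance-enthropy} and $a\ge 1/2$ supplying exactly the hypotheses of that lemma. The bookkeeping you flag (concatenating the signed weights $\alpha_iA_i$ and $-\beta_jB_j$ along the ordered nodes) is indeed all that remains, and the paper leaves it implicit as well.
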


For $p=1$ we can be more precise.
\begin{thm}\label{th:Vp1}
The function
$$
\widehat{V}_1(x)=\frac{\Gamma^{\alpha}(x/\alpha+a)\alpha^{x}}{\Gamma^{\beta}(x/\beta+a)\beta^{x}}
$$
is l.c.m. and the function
$$
(\log\widehat{W}_1)'(x)=\psi(x/\alpha+a)-\psi(x/\beta+a)
$$
is a Bernstein function if and only if $\alpha\le\beta$ and, if $\alpha\ne\beta$, $a\ge1/2$.
\end{thm}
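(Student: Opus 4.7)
The plan is to reduce the claim to the section's framework via the parameter identification $p=s=1$, $A_1=1/\alpha$, $B_1=1/\beta$, $\alpha_1=\alpha$, $\beta_1=\beta$, $a_1=b_1=a$, $\theta=\beta/\alpha$. With these choices $\widehat V_1$ becomes a specialisation of $V$ from~\eqref{eq:V-defined}, the balance condition in~\eqref{eq:balance-enthropy} is the trivial identity $\alpha_1A_1=\beta_1B_1=1$, and $\rho=(1/\alpha)(1/\beta)^{-1}=\beta/\alpha=\theta$ so the entropy inequality holds with equality. Consequently Theorem~\ref{th:Vlcm} and Corollary~\ref{cr:necessary} apply with no further hypothesis to verify.

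For sufficiency I would split on whether $\alpha=\beta$. The case $\alpha=\beta$ is trivial since $\widehat V_1\equiv1$ and $(\log\widehat W_1)'\equiv0$. For $\alpha<\beta$ and $a\ge1/2$, conditions (a)--(c) of Theorem~\ref{th:LeblancJohnson} reduce to $1/\alpha\ge1/\beta$ together with the tautology $\alpha_1A_1=\beta_1B_1=1>0$, so that theorem yields logarithmic complete monotonicity of $\widehat V_1$. This makes $(\log\widehat W_1)''=(\log\widehat V_1)''$ completely monotonic, so $(\log\widehat W_1)'$ is non-decreasing on $(0,\infty)$, and its boundary value $(\log\widehat W_1)'(0^+)=\psi(a)-\psi(a)=0$ forces $(\log\widehat W_1)'\ge0$ throughout, which is precisely the Bernstein property.

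For necessity I would extract the inequalities $(\beta-\alpha)(a-1/2)\ge0$ and $a\alpha\le a\beta$ from Corollary~\ref{cr:necessary}(a) and (b). When $a>0$, (b) forces $\alpha\le\beta$ and then (a) forces $a\ge1/2$ whenever $\alpha<\beta$, exactly matching the claim. The more delicate case is $a=0$: here Corollary~\ref{cr:necessary} yields only $\alpha\ge\beta$ (from (b) in its equality subcase), and in fact $\widehat V_1$ may still be l.c.m.\ for $\alpha>\beta$, because the monotonicity of $t/(1-e^{-t})$ makes the function $Q$ from \eqref{eq:Q(u)} non-negative in that regime. To eliminate this case I would invoke the Bernstein hypothesis: the asymptotic $\psi(y)=-1/y+O(1)$ as $y\to0^+$ gives $(\log\widehat W_1)'(x)\sim(\beta-\alpha)/x\to-\infty$ as $x\to0^+$, contradicting non-negativity. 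Hence $a=0$ forces $\alpha=\beta$, and the iff is closed.

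The only non-bookkeeping step is this last one: the necessity of $\alpha\le\beta$ at $a=0$ is invisible to Corollary~\ref{cr:necessary} alone, and pinning it down requires the Bernstein hypothesis together with the singularity of $\psi$ at the origin.
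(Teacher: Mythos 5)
Your proof is correct, and while the skeleton is the same as the paper's (reduce everything to the sign of $Q_1$ via Lemma~\ref{lm:V2prime-cm} and Theorem~\ref{th:Vlcm}, after noting that \eqref{eq:balance-enthropy} holds automatically with $\rho=\theta=\beta/\alpha$), the two directions are handled with genuinely different ingredients. For sufficiency the paper argues directly that $uQ_1(u)=h(u\alpha)-h(u\beta)\ge0$ because $h(u)=ue^{-au}/(1-e^{-u})$ is strictly decreasing when $a\ge1/2$; you instead specialize Theorem~\ref{th:LeblancJohnson} to $p=s=1$, which is legitimate (that theorem precedes this one and does not depend on it) but outsources the analytic content to the Leblanc--Johnson lemma, whereas the paper's route is self-contained. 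For necessity of $a\ge1/2$ the paper cross-multiplies $Q_1$ and Taylor-expands the resulting $\sigma(u)$ at $u=0$, finding $\sigma(0)=\sigma'(0)=0$ and $\sigma''(0)=\alpha\beta(2a-1)(\beta-\alpha)$; you read the same inequality $(\beta-\alpha)(a-1/2)\ge0$ off Corollary~\ref{cr:necessary}(a), which is an equivalent but shorter derivation. The most notable difference is the boundary case $a=0$: the paper's assertions that $\lim_{x\to0}(\log\widehat W_1)'(x)=0$ and that Corollary~\ref{cr:necessary}(b) forces $\alpha\le\beta$ both tacitly assume $a>0$, and indeed at $a=0$ one has $Q_1\ge0$ for $\alpha>\beta$ (so $\widehat V_1$ is l.c.m.\ there), which the paper does not address. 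Your observation that the Bernstein hypothesis then fails because $(\log\widehat W_1)'(x)=(\beta-\alpha)/x+O(1)\to-\infty$ as $x\to0^{+}$ correctly closes this corner, so your write-up is in this respect more careful than the published argument.
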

\begin{proof}
As conditions \eqref{eq:balance-enthropy} are satisfied for the function $\widehat{V}_1(x)$ and $\lim\limits_{x\to0}(\log\widehat{W}_1)'(x)=0$ the claim of the theorem is equivalent to the assertion that
$$
(\log\widehat{W})''(x)=(1/\alpha)\psi'(x/\alpha+a)-(1/\beta)\psi'(x/\beta+a)
$$
is completely monotonic which, by Lemma~\ref{lm:V2prime-cm}, is equivalent to the inequality
$$
Q_1(u)=\frac{{\alpha}e^{-au\alpha}}{1-e^{-u\alpha}}-\frac{{\beta}e^{-ua\beta}}{1-e^{-u\beta}}\ge0~\text{for all}~u>0.
$$
Hence, we need to prove that the above inequality is true if and only if $\alpha\le\beta$ and $a\ge1/2$ when $\alpha\ne\beta$.
Assume first that it is true. According to Corollary~\ref{cr:necessary}(b) this implies that  $\alpha\le\beta$.
Next, cross-multiplication yields the following form of the required inequality
$$
\sigma(u)={\alpha}e^{-au\alpha}-{\alpha}e^{-u(a\alpha+\beta)}-{\beta}e^{-ua\beta}+{\beta}e^{-u(a\beta+\alpha)}\ge0.
$$
An easy calculation shows that $\sigma(0)=\sigma'(0)=0$ and $\sigma''(0)=\alpha\beta(2a-1)(\beta-\alpha)$.
The case $\alpha=\beta$ is trivial, so we assume that $\alpha<\beta$. Then $2a-1\ge0$ is necessary for $\sigma(u)\ge0$
in the neighborhood of $u=0$.

In the opposite direction assume that $\alpha\le\beta$ and $a\ge1/2$.  Inequality $uQ_1(u)>0$ follows from the fact that the function $x\to xe^{-ax}/(1-e^{-x})$ is strictly decreasing on $(0,\infty)$ if $a\ge1/2$.
\end{proof}

\section{An application}

Motivated by some problems in logarithmic concavity of generic series containing ratios of rising factorials, we
consider monotonicity of certain ratios of products of gamma functions. We remark here that the decreasing gamma ratio
in the following theorem is not completely monotonic under conditions of the theorem.  As far as we know these monotonicity results are new.
\begin{thm}\label{th:monotonic-gammas}
Suppose conditions \eqref{eq:A-1B-1maj} are satisfied and denote $A_{i}=1/\alpha_{i}$, $B_{i}=1/\beta_{i}$, $i=1,\ldots,p$, and $Y>X>0$. Then
the function
\begin{equation}\label{eq:monotonic-gammas}
F(a)=\prod_{i=1}^{p}\frac{\Gamma(A_{i}X+a)\Gamma(B_{i}Y+a)}{\Gamma(A_{i}Y+a)\Gamma(B_{i}X+a)}
\end{equation}
is monotone decreasing on $[1,\infty)$.
\end{thm}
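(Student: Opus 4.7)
The plan is to differentiate $\log F(a)$ in $a$ and reduce the resulting inequality to a monotonicity statement already contained in Theorem~\ref{th:Vhatlcm}.

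First I would write
\[
(\log F)'(a)=h(X;a)-h(Y;a),\qquad h(y;a):=\sum_{i=1}^{p}\bigl[\psi(A_{i}y+a)-\psi(B_{i}y+a)\bigr].
\]
Since $Y>X>0$, it will suffice to show that $y\mapsto h(y;a)$ is non-decreasing on $(0,\infty)$ for every fixed $a\ge 1$; then automatically $h(X;a)\le h(Y;a)$ and $(\log F)'(a)\le 0$.

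The key observation I would exploit is that, with $\alpha_{i}$ and $\beta_{i}$ being exactly the parameters introduced in the statement through $A_{i}=1/\alpha_{i}$ and $B_{i}=1/\beta_{i}$, the function $h(\,\cdot\,;a)$ coincides with $(\log\widehat{W})'$, where $\widehat{W}$ is the function defined just before Theorem~\ref{th:Vhatlcm}. Differentiating in $y$ gives
\[
\partial_{y}h(y;a)=\sum_{i=1}^{p}\bigl[A_{i}\psi'(A_{i}y+a)-B_{i}\psi'(B_{i}y+a)\bigr]
=(\log\widehat{W})''(y),
\]
and Theorem~\ref{th:Vhatlcm} guarantees, under the majorization hypotheses \eqref{eq:A-1B-1maj} and $a\ge 1$, that $(\log\widehat{W})'$ is a Bernstein function, so in particular $(\log\widehat{W})''\ge 0$ on $(0,\infty)$.

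Consequently $h(\,\cdot\,;a)$ is non-decreasing in $y$, which yields $h(X;a)\le h(Y;a)$ and hence $(\log F)'(a)\le 0$, proving that $F$ is monotone decreasing on $[1,\infty)$. I do not foresee a substantial obstacle: the argument is essentially a bookkeeping step that trades a derivative in $a$ for a derivative in $y$ of an already studied function. The only point demanding attention is the assumption $a\ge 1$, which is precisely the hypothesis invoked in Theorem~\ref{th:Vhatlcm} and which correspondingly determines the lower endpoint of the monotonicity interval.
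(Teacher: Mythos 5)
Your proposal is correct and follows essentially the same route as the paper: both reduce $(\log F)'(a)$ to the difference $h(X;a)-h(Y;a)$ and invoke Theorem~\ref{th:Vhatlcm} to conclude that $y\mapsto h(y;a)=(\log\widehat{W})'(y)$ is increasing (being a Bernstein function under \eqref{eq:A-1B-1maj} and $a\ge1$), whence $(\log F)'\le0$. The only cosmetic difference is that you make the monotonicity in $y$ explicit by differentiating $h$, while the paper simply cites the increase of the Bernstein function directly.
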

\begin{proof}
Indeed, by Theorem~\ref{th:Vhatlcm} the function
$$
(\log\widehat{W})'(x)=\sum\limits_{i=1}^{p}[\psi(A_{i}x+a)-\psi(B_{i}x+a)]
$$
is a Bernstein function  under the conditions of the theorem and, in particular, is increasing on $(0,\infty)$.
Hence,
$$
\frac{\partial}{\partial{a}}\log{F(a)}
=\sum\limits_{i=1}^{p}[\psi(A_{i}X+a)-\psi(B_{i}X+a)-(\psi(A_{i}Y+a)-\psi(B_{i}Y+a))]<0
$$
because $Y>X>0$.
\end{proof}
\textbf{Remark.} In view of Theorem~\ref{th:Vp1} for $p=1$ the the function $F(a)$ in \eqref{eq:monotonic-gammas} is decreasing on $[1/2,\infty)$ if $A\ge{B}$ and $Y>X>0$.

The $p=1$ case also leads to the following monotonicity result for a ratio of rising factorials.

\begin{cor}
Suppose $\delta>0$ and $n>m>0$ are integers.
Then the rational function
$$
F(a)=\frac{(a+\delta{m})_{m}}{(a+\delta{n})_{n}}
$$
is decreasing on $[1/2,\infty)$.
\end{cor}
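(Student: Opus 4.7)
The plan is to reduce the claim directly to the $p=1$ case of Theorem~\ref{th:monotonic-gammas} as formulated in the remark immediately following that theorem. First, I would use the identity $(x)_k=\Gamma(x+k)/\Gamma(x)$ to rewrite
$$
F(a)=\frac{\Gamma(a+\delta m+m)\,\Gamma(a+\delta n)}{\Gamma(a+\delta m)\,\Gamma(a+\delta n+n)}.
$$
The goal is then to choose positive parameters $A\ge B$ and $Y>X>0$ so that this expression equals
$$
\frac{\Gamma(AX+a)\Gamma(BY+a)}{\Gamma(AY+a)\Gamma(BX+a)}.
$$

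The key observation is that the two ``numerator'' arguments $\delta n$ and $\delta m+m$ differ from the two ``denominator'' arguments $\delta n+n$ and $\delta m$ by a common ratio $(\delta n+n)/(\delta n)=(\delta m+m)/(\delta m)=(\delta+1)/\delta$. This forces $Y/X=(\delta+1)/\delta$, and a convenient normalization is $X=1$, $Y=(\delta+1)/\delta$, $A=\delta n$, $B=\delta m$. With these choices one checks $AX=\delta n$, $AY=\delta n+n$, $BX=\delta m$, $BY=\delta m+m$, so $F(a)$ acquires exactly the required form.

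It remains only to verify the hypotheses of the remark: $A=\delta n\ge\delta m=B$ since $n>m$, and $Y=(\delta+1)/\delta>1=X>0$ since $\delta>0$. The remark then yields that $F(a)$ is decreasing on $[1/2,\infty)$, as claimed. The only nontrivial step is finding the parameter identification above; once made, the conclusion follows immediately and no further calculations are needed.
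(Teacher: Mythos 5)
Your proposal is correct and follows essentially the same route as the paper: rewrite $F(a)$ via $(x)_k=\Gamma(x+k)/\Gamma(x)$ as a $p=1$ instance of \eqref{eq:monotonic-gammas} and invoke the remark following Theorem~\ref{th:monotonic-gammas}. The paper's parameter choice is $X=m$, $Y=n$, $A=1+\delta$, $B=\delta$, while yours is an equivalent reparametrization (swapping which arguments play the roles of $AX$ and $BY$); both satisfy $A\ge B$ and $Y>X>0$, so the conclusion follows identically.
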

\begin{proof}
Denote $X=m$, $Y=n$, $B=\delta$, $A=1+\delta$. Then as $(x)_k=\Gamma(x+k)/\Gamma(x)$, we obtain
$$
\frac{\partial}{\partial{a}}\log{F(a)}=\psi(AX+a)-\psi(BX+a)-(\psi(AY+a)-\psi(BY+a))]<0
$$
for all $a\ge1/2$ in view of the remark below Theorem~\ref{th:monotonic-gammas}.
\end{proof}

For $p=2$ Theorem~\ref{th:monotonic-gammas} asserts that the function
$$
a\to\frac{\Gamma(A_{1}X+a)\Gamma(B_{1}Y+a)\Gamma(A_{2}X+a)\Gamma(B_{2}Y+a)}{\Gamma(A_{1}Y+a)\Gamma(B_{1}X+a)\Gamma(A_{2}Y+a)\Gamma(B_{2}X+a)}
$$
is decreasing on $[1,\infty)$ if $Y>X>0$, $0<A_1^{-1}\le{A_2^{-1}}$, $0<B_1^{-1}\le{B_2^{-1}}$, $A_1^{-1}\le{B_1^{-1}}$ and
$A_1^{-1}+A_2^{-1}\le B_1^{-1}+B_2^{-1}$.  On denoting $\mu_1=A_1X$, $\mu_2=A_2X$, $\mu_3=A_1Y$, $\mu_4=A_2Y$, $\nu_1=B_1Y$, $\nu_2=B_2Y$, $\nu_3=B_1X$, $\nu_4=B_2X$, the function in question becomes
$$
a\to\frac{\Gamma(\mu_1+a)\Gamma(\mu_2+a)\Gamma(\nu_1+a)\Gamma(\nu_2+a)}{\Gamma(\mu_3+a)\Gamma(\mu_4+a)\Gamma(\nu_3+a)\Gamma(\nu_4+a)}.
$$
This function is decreasing on $[1,\infty)$ if the following conditions are satisfied:
\begin{equation*}
\begin{split}
&\text{(a)}~~ \mu_i,\nu_i>0,~~i=1,\ldots,4;
\\[6pt]
&\text{(b)}~~ \frac{\mu_3}{\mu_1}=\frac{\mu_4}{\mu_2}=\frac{\nu_1}{\nu_3}=\frac{\nu_2}{\nu_4}>1~\text{(since $Y>X$)};
\\[6pt]
&\text{(c)}~~ \mu_2\le\mu_1~\text{and}~\nu_2\le\nu_1~\left(\text{since}~A_2\le{A_1}~\text{and}~B_2\le{B_1}\right);
\\[6pt]
&\text{(d)}~~\nu_1\le\mu_3~\text{and}~\frac{1}{\mu_1}+\frac{1}{\mu_2}\le\frac{1}{\nu_3}+\frac{1}{\nu_4}
\left(\text{since}~B_1\le{A_1}~\text{and}~A_1^{-1}+A_2^{-1}\le B_1^{-1}+B_2^{-1}\right).
\end{split}
\end{equation*}
Note that in view of (b) conditions (c) also imply that $\mu_{4}\le{\mu_3}$ and $\nu_{4}\le\nu_{3}$, while conditions (d) imply that $\nu_{3}\le\mu_{1}$ and $\mu_{3}^{-1}+\mu_{4}^{-1}\le\nu_{1}^{-1}+\nu_{2}^{-1}$.
We further remark that we can recover the parameters $A_1$, $A_2$, $B_1$, $B_2$, $X$, $Y$ from $\mu_i$, $\nu_j$ by setting $X=1$ and $Y=$the common value of the ratios in (b). The remaining parameters are then immediate from definitions of $\mu_i$, $\nu_j$.

\end{document}